    \setlist[enumerate,1]{label=\textnormal{(\alph*)}}
\theoremstyle{plain}
\newtheorem{theorem}{Theorem}
\newtheorem{lemma}[theorem]{Lemma}
\theoremstyle{remark}
\theoremstyle{definition}
\newtheorem*{definition}{Definition} 
\newtheorem*{question*}{Question}
\newtheorem{observation}{Observation}
\newcommand{\ka}[2]{$#1\textnormal{a}_{#2}$}
\title{Space-Efficient Prime Knot 7-Mosaics}
\author{Aaron Heap and Natalie LaCourt}
\begin{document}

\maketitle
\begin{abstract} The concepts of tile number and space-efficiency for knot mosaics were first explored by Heap and Knowles in \cite{Heap1}, where they determined the possible tile numbers and space-efficient layouts for every prime knot with mosaic number 6 or less. In this paper, we extend those results to prime knots with mosaic number 7.
\end{abstract}


\section{Introduction}

Knot mosaics were first introduced by Lomonaco and Kauffman in \cite{Lom-Kauff} as a basic building block of blueprints for constructing an actual physical quantum system, with a mosaic knot representing a quantum knot. The mosaic system they developed consisted of creating a square array of tiles selected from the list of tiles given in Figure \ref{tiles}. These mosaic tiles are identified, respectively, as $T_0$, $T_1$, $T_2$, $\ldots$, $T_{10}$.

\begin{figure}[h]
  \centering
  \includegraphics{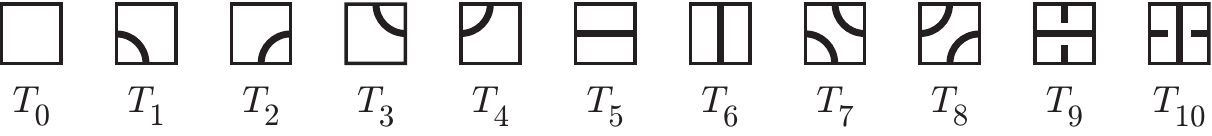}\\
  \caption{Tiles used for constructing mosaic knots.}
  \label{tiles}
\end{figure}

The first mosaic tile, $T_0$, is a blank tile, and the remaining mosaic tiles, referred to as \textit{non-blank tiles}, depict pieces of curves that will by used to construct knots or links when appropriately connected. These non-blank tiles consist of single arcs, horizontal or vertical line segments, double arcs, and over/under knot projection crossings. A \textit{connection point} of a tile is an endpoint of a curve drawn on the tile. A tile is \textit{suitably connected} if each of its connection points touches a connection point of an adjacent tile.

\begin{definition}
An $n \times n$ array of suitably connected tiles is called an \textit{$n \times n$ knot mosaic}, or \textit{$n$-mosaic}.
\end{definition}

Note that an $n$-mosaic could represent a knot or a link, as illustrated in Fig. \ref{mosaic-example}. The first two mosaics depicted are $4$-mosaics, and the third one is a $5$-mosaic. In this paper, we will be working only with knots, not links.

\begin{figure}[h]
  \centering
  \includegraphics{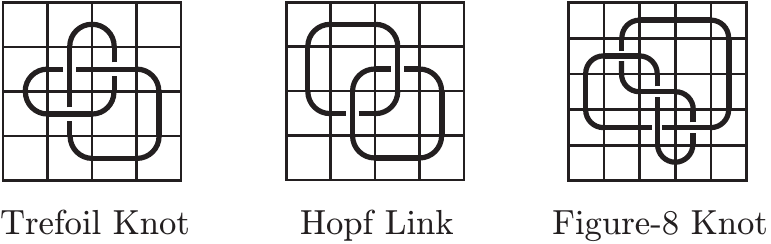}\\
  \caption{Examples of knot mosaics.}
  \label{mosaic-example}
\end{figure}

In addition to the original eleven tiles $T_0$ - $T_{10}$, we will also make use of \textit{nondeterministic tiles}, such as those in Figure \ref{nondeterministic}, when there are multiple options for a specific tile location. For example, if a tile location must contain a crossing tile $T_9$ or $T_{10}$ but we have not yet chosen which, we will use the nondeterministic crossing tile, shown as the first tile in Figure \ref{nondeterministic}. Similarly, if we know that a tile location must have four connection points but we do not know if the tile is a double arc tile ($T_7$ or $T_{8}$) or a crossing tile ($T_9$ or $T_{10}$), we will indicate this with a tile that has four connection points, as seen in the second tile of Figure \ref{nondeterministic}. If the tile contains dashed lines or arcs, these will indicate the options for that tile. The third tile in Figure \ref{nondeterministic} could be a horizontal segment $T_5$ or a single arc $T_2$.
\begin{figure}[h]
  \centering
  \includegraphics{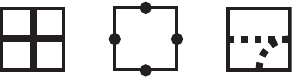}\\
  \caption{Examples of nondeterministic tiles.}
  \label{nondeterministic}
\end{figure}

There are a few knot invariants of primary importance in this paper. The \emph{crossing number} of a knot is the least number of crossings in any projection of the knot. The remaining invariants are directly related to knot mosaics. The first is the mosaic number of a knot, introduced in \cite{Lom-Kauff}.
\begin{definition}
The \textit{mosaic number} of a knot $K$ is defined to be the smallest integer $m$ for which $K$ can be represented on an $m$-mosaic.
\end{definition}

The next knot invariant is the tile number of a knot, introduced by Lee, Ludwig, Paat, and Peiffer in \cite{Lee2} and first explored by Heap and Knowles in \cite{Heap1}.
\begin{definition}
The \textit{tile number} of a knot $K$ is the smallest number of non-blank tiles needed to construct $K$ on any size mosaic.
\end{definition}

We say that a knot mosaic is \textit{minimal} if it is a realization of the mosaic number of the knot depicted on it. That is, if a knot with mosaic number $m$ is depicted on an $m$-mosaic, then that mosaic is a minimal knot mosaic. It turns out that the tile number of a knot may not be realizable on a minimal mosaic. This fact was discovered by Heap and Knowles in \cite{Heap2}, where it was shown that the knot $9_{10}$ has mosaic number 6 and tile number 27, but that on a 6-mosaic 32 non-blank tiles were required. The tile number 27 was only achievable on a larger mosaic. Because of this, it is also of some interest to know how many non-blank tiles are necessary to depict a knot on a minimal mosaic, which is known as the minimal mosaic tile number of a knot, first introduced in \cite{Heap1}.

\begin{definition}
Let $m$ be the mosaic number of $K$. The \textit{minimal mosaic tile number} of $K$ is the smallest number of non-blank tiles needed to construct $K$ on an $m$-mosaic.
\end{definition}

So the knot $9_{10}$ has mosaic number 6, tile number 27, and minimal mosaic tile number 32, with the tile number achieved on a 7-mosaic. $9_{10}$ is the simplest knot for which the tile number and minimal mosaic tile number are not equal. In this paper, we give more examples of this phenomenon.

As we work with knot mosaics, we can move parts of the knot around within the mosaic via \textit{mosaic planar isotopy moves} to obtain a different knot mosaic that depicts the same knot. Two knot mosaic diagrams are of the \emph{same knot type} (or \emph{equivalent}) if we can change one to the other via a sequence of these mosaic planar isotopy moves. An examples of a mosaic planar isotopy move is given in Figure \ref{isotopy-example}, which is equivalent to a Reidemeister Type I move. If we have a mosaic that has one of these $2 \times 2$ submosaics within it, then that submosaic can be replaced by either of the other two without changing the knot type of the depicted knot. While these moves are technically tile replacements within the mosaic, they are analogous to the planar isotopy moves used to deform standard knot diagrams. A more complete list of these moves are given and discussed in \cite{Lom-Kauff} and \cite{Kuriya}. We will make significant use of these moves throughout this paper, as we attempt to construct knot mosaics that use the least number of non-blank tiles.

\begin{figure}[h]
  \centering
  \includegraphics{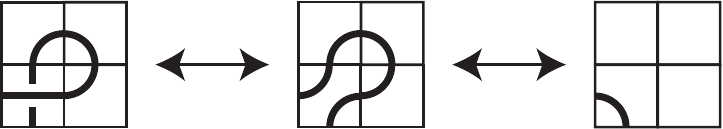}\\
  \caption{Example of a mosaic planar isotopy move.}
  \label{isotopy-example}
\end{figure}

A knot mosaic is called \textit{reduced} if there are no unnecessary, easily removed crossings in the knot mosaic diagram. One such reducible crossing is given in the first $2 \times 2$ submosaic of Figure \ref{isotopy-example}. Another example is given in Figure \ref{reducible}, where the crossing in the fourth row and third column is unnecessary. If we want to create knot mosaics efficiently, using the least number of non-blank tiles necessary, we will want to avoid these reducible crossings.

\begin{figure}[h]
  \centering
    \phantom{.} \hfill
    \begin{minipage}{.4\textwidth}
       \centering
        \includegraphics{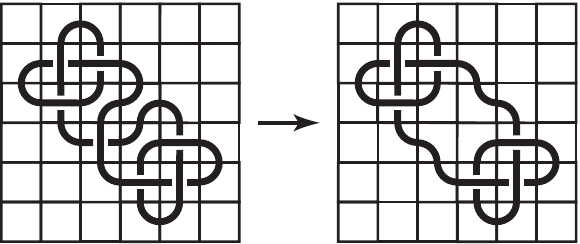}
        \caption{Reducing a reducible knot mosaic.}
        \label{reducible}
    \end{minipage} \hfill
    \begin{minipage}{0.46\textwidth}
       \centering
        \includegraphics{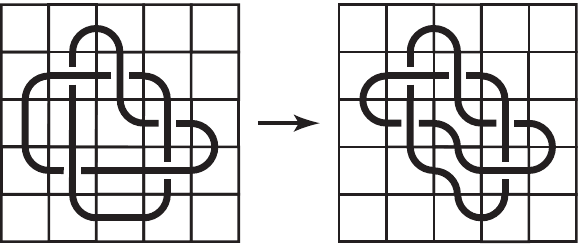}
        \caption{Space-inefficient and space-efficient mosaics of the $5_1$ knot.}
        \label{space-efficient}
    \end{minipage} \hfill

\end{figure}

\begin{definition}
A knot $n$-mosaic is \textit{space-efficient} if it is reduced and the number of non-blank tiles is as small as possible on an $n$-mosaic without changing the knot type of the depicted knot.
\end{definition}

The number of non-blank tiles in a knot mosaic that is space-efficient cannot be decreased through a sequence of mosaic planar isotopy moves. In Figure \ref{space-efficient}, the two knot mosaics depict the same knot (the $5_1$ knot). However, the first knot mosaic uses nineteen non-blank tiles and the second knot mosaic uses only seventeen. In fact, seventeen is the minimum number of non-blank tiles possible to create this knot on a 5-mosaic. Therefore, the second mosaic is space-efficient, but the first one is not.


In \cite{Heap1}, the possible layouts for space-efficient $n$-mosaics, together with the possible values of the minimal mosaic tile numbers and tile numbers, are given for all $n \leq 6$. In the supplement to \cite{Heap2}, we are provided with a table of knot mosaics that includes space-efficient mosaics for all prime knots with mosaic number 6 or less. In each of these prime knot mosaics, either the tile number or minimal mosaic tile number is realized. In this paper, we expand upon these ideas to include 7-mosaics.

For a quality introduction to knot mosaics, we refer the reader to \cite{Lee2}. For more details related to traditional knot theory, we refer the reader to \cite{Adams} by Adams. We also point out that throughout this paper we make use of KnotScape \cite{Thistle}, created by Thistlethwaite and Hoste, to verify that a given knot mosaic represents the specific knot we claim it does. Finally, special thanks are due to James Canning, who was kind enough and brilliant enough to create for us a program that automated the process of creating the mosaics in the Table of Mosaics of Section \ref{table-of-mosaics}.

\section{Space-Efficient 7-mosaics}

In \cite{Heap1}, the authors determined the bounds for the number of non-blank tiles needed for a space-efficient $n$-mosaic ($n \geq 4$) depicting a prime knot and in which either every row or every column of the mosaic is occupied. The lower bound is $5n-8$. If $n$ is even, then the upper bound is $n^2-4$. If $n$ is odd, then the upper bound is $n^2-8$. Therefore, in the specific case of $n=7$, if $t$ is the number of non-blank tiles used in the mosaic, then  $27 \leq t \leq 41$. In \cite{Heap1}, the authors provided a conjecture for the possible values for the number of non-blank tiles used in the mosaic, and we provide a confirmation of that conjecture now. (We note that the original conjecture included the extra possibility of 40 non-blank tiles, but the layout that resulted in that option was not space-efficient.)

\begin{theorem}\label{tile-numbers} If we have a space-efficient 7-mosaic of a prime knot $K$ for which either every column or every row is occupied, then the only possible values for the number of non-blank tiles used in the mosaic are 27, 29, 31, 32, 34, 36, 37, 39, and 41. Furthermore, any such mosaic of $K$ is equivalent (up to symmetry) to one of the following mosaics.
\begin{center}
  \includegraphics[width=1\columnwidth]{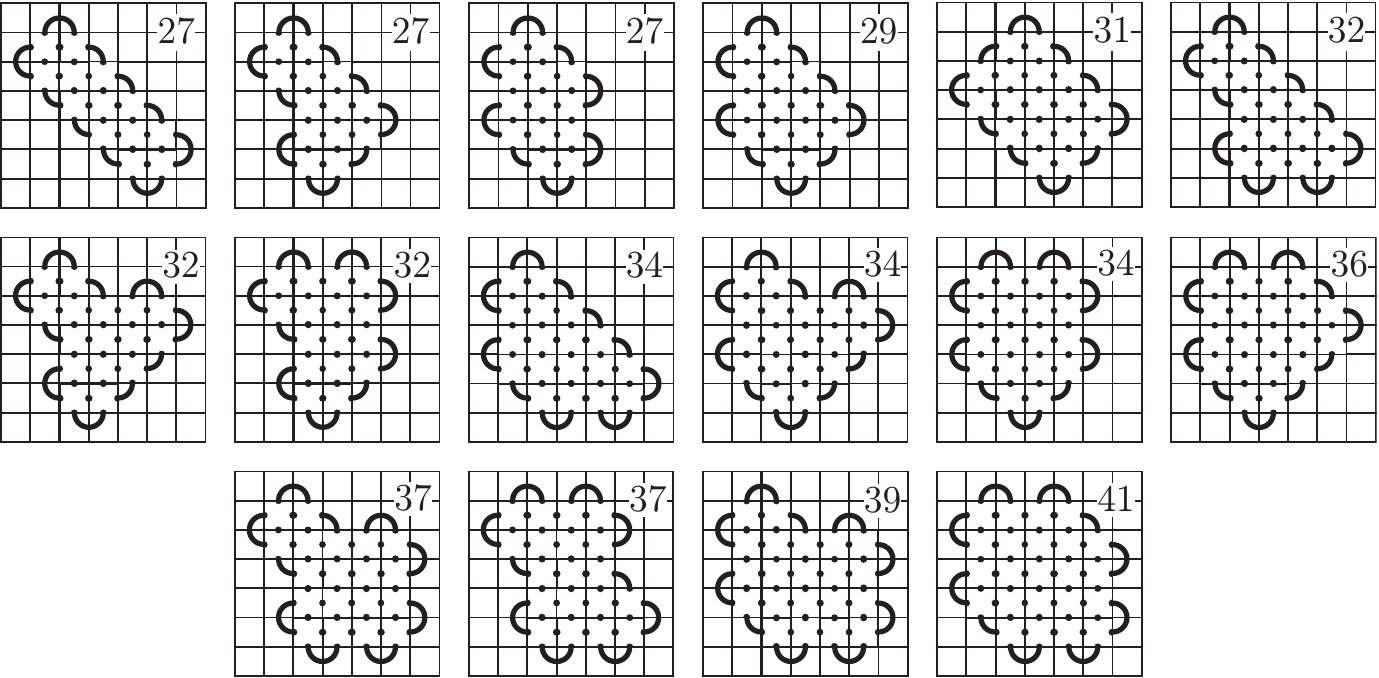}\\
\end{center}
\end{theorem}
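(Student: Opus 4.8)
The plan is to first normalize using the dihedral symmetry of the square mosaic: since transposing the mosaic interchanges the roles of rows and columns, I may assume throughout that every row is occupied, and argue about the resulting layouts up to rotation and reflection of the square. I would then pin down the non-blank tiles forced by the boundary together with the requirement that the mosaic be suitably connected. A non-blank tile lying in the \emph{topmost} occupied row can have no connection point on its top edge, since the tile directly above it is blank or off the board; matching connection points then forces every non-blank tile in this row to be a single-arc ``cap'' (possibly joined by horizontal segments), contributing exactly two downward connection points, and in particular forbids crossing or double-arc tiles there, as those require four connection points. Applying the same reasoning to the bottommost occupied row and to the leftmost and rightmost occupied columns shows that the extreme occupied rows and columns consist entirely of arcs and segments turning back into the interior, while everything strictly inside must be built from segments, double arcs, and crossings.

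Next I would record the two global constraints that drive the count, building on the structural analysis of space-efficient mosaics in \cite{Heap1}. Because the knot projection is a closed curve, it meets each of the six interior vertical grid lines, and each of the six interior horizontal grid lines, an even number of times; equivalently, the connection points appearing on any full interior grid line total an even number. Second, space-efficiency forbids any reducible crossing and, more generally, any local $2\times 2$ configuration that a mosaic planar isotopy move could simplify, which bounds how the arcs of the outer layer may nest and rules out wasted segments. Combining the ``caps and cups'' description of the boundary layer with these parity and reducibility constraints restricts the set of occupied columns appearing in each row to a short list. I would organize the resulting enumeration as a case analysis on the width of the occupied region (the number of occupied columns) and on how many rows attain that maximal width, peeling the mosaic from the outside inward.

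Carrying out this case analysis, each surviving shape is a concrete layout of arcs and segments on the outside and crossings and double arcs on the inside, and for each shape I would read off the number of non-blank tiles directly. The admissible totals should then assemble into exactly $\{27,29,31,32,34,36,37,39,41\}$, with the lower extreme $27 = 5\cdot 7 - 8$ and the upper extreme $41 = 7^2 - 8$ matching the bounds imported from \cite{Heap1}. To complete the classification I would verify that within each shape the tile placement is determined up to a symmetry of the square together with mosaic planar isotopy, so that every space-efficient mosaic satisfying the hypotheses is equivalent to one of the displayed layouts, and conversely that each displayed layout is genuinely reduced, minimal, and realized by a prime knot, using KnotScape \cite{Thistle} to confirm the knot type.

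I expect the main obstacle to be the borderline counts rather than the generic ones. The naive shape-count will suggest certain totals --- notably $33$ and $35$, and the configuration yielding $40$ that appeared in the original conjecture --- that do not in fact occur. For each of these I would have to check carefully that the apparently valid layout either admits a mosaic planar isotopy reducing its tile count, so that it fails to be space-efficient, or else is forced to represent a non-prime or reducible diagram. This verification is local but numerous, and it is where the bulk of the casework will concentrate; isolating a uniform reason that these near-miss layouts collapse, rather than treating each by hand, is the delicate part of the argument.
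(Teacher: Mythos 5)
Your outline breaks at the structural claim on which the whole enumeration rests. From the fact that a tile in the topmost (bottommost, leftmost, rightmost) occupied row or column can have no connection point on its outward edge, you conclude that ``everything strictly inside must be built from segments, double arcs, and crossings.'' That conclusion does not follow---the connection-point argument constrains only the four extreme rows/columns and says nothing about tiles elsewhere---and it is in fact false: the space-efficient layouts in Theorem \ref{tile-numbers} are staircase-shaped, and at every inner corner of the staircase the knot turns back on itself with a single arc tile ($T_1$--$T_4$) lying strictly inside the extreme occupied rows and columns. For instance, in the 27-tile layouts the occupied portion of the second row ends in a single arc tile in an interior column; only the tiles away from the outer edge of the \emph{knot} (not of the grid) are forced to have four connection points. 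An enumeration built on your dichotomy would therefore exclude precisely the layouts the theorem asserts, so the case analysis cannot close.

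Relatedly, the tools you substitute for the paper's key technical step are too weak to drive the enumeration. The parity condition---each interior grid line carries an even number of connection points---holds for \emph{every} suitably connected knot mosaic, space-efficient or not, so it cannot help isolate the space-efficient layouts. What actually powers the paper's proof is Lemma \ref{no-segments}: segment tiles $T_5$, $T_6$ can be eliminated from the second and penultimate occupied rows and columns. Proving this is the bulk of the work (a four-case analysis over the cap configurations of Figure \ref{first-row-caps}, using Observations \ref{four-below}, \ref{common-isotopy}, and \ref{composite-knot} to kill dozens of partial mosaics). With that lemma in hand, the outer two rows and columns on each side reduce to the twenty ``outer shells'' of Figure \ref{7mosaic-layouts_outer-shell}, and the proof finishes by filling the inner $3\times 3$ block of each shell and showing every alternative completion is either not space-efficient or isotopic to a listed layout without changing the tile count. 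Your plan of ``peeling from the outside inward'' by width of the occupied region would still have to confront meandering arcs and segments in the second row---exactly the casework you have not accounted for. Finally, the KnotScape realizability check you append is not needed for this theorem: it only classifies possible layouts, and realization by actual prime knots is deferred to Theorems \ref{t27-m6} and \ref{t27-m7}.
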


The layouts given in Theorem \ref{tile-numbers} are listed in order of the number of non-blank tiles used, with that number displayed in the upper-right corner. As we can see from the layouts, other than the non-blank tiles that form the outer edges of the knot, the remaining non-blank tiles each have four connection points. As a consequence, the use of the horizontal and vertical segment tiles, $T_5$ and $T_6$, are not necessary for space-efficient mosaics. This does not mean that a mosaic that makes use of these tiles is not space-efficient. However, for any space-efficient mosaic that makes use of them, there is a planar isotopy move that can be applied that removes the line segment tiles without changing the number of non-blank tiles. For the sake of clarity, we postpone the proof of this theorem until Section \ref{proof-of-theorem}.

Now that we know the space-efficient layouts of prime knot 7-mosaics, we seek to determine what prime knots actually fit within these. Naively, there are four tile options for each remaining nondeterministic tile, which means that there are $4^{13}=67,108,864$ options for the first layout and $4^{23}$ (over 70.3 trillion) options for the last layout. Of course these options can be greatly reduced using symmetry and space-efficiency. Also, we know from \cite{Heap2} that every prime knot with crossing number 8 or less can fit on a 6-mosaic and has tile number 27 or less. Therefore, we only seek to find prime knots with crossing number 9 or more. For the first layout, symmetry, space-efficiency, and restricting to nine or more crossing tiles, we can reduce the number of options to the low thousands, a huge improvement over 67 million. Because of the vast number of options, especially for the larger layouts, we limit ourselves in this paper to just the three simplest layouts, the ones that use 27 non-blank tiles.

Using the techniques of \cite{Heap2}, we fill portions of these layouts with the $3 \times 3$ building blocks shown in Figure \ref{building-blocks}, which have either two, three, or four crossing tiles. (A similarly filled block with only one crossing tile is not space-efficient.) For example, in the first layout with 27 non-blank tiles, the upper-left $3 \times 3$ submosaic can be filled with any of the options given in Figure \ref{building-blocks}. The lower-right $3 \times 3$ submosaic can be filled with a rotation of these. Then we populate the rest of the layout with either double-arc tiles or crossing tiles, making sure that we use at least nine crossing tiles.

\begin{figure}[h]
  \centering
  \includegraphics{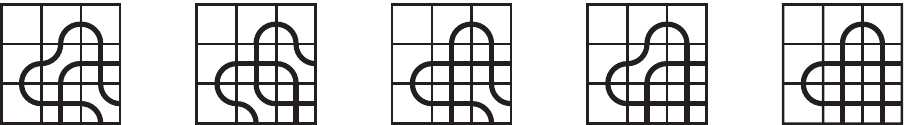}\\
  \caption{$3 \times 3$ building blocks.}
  \label{building-blocks}
\end{figure}

Once the mosaics are completely filled within the specified layout, we eliminate any links and composite knots, any duplicate layouts that are equivalent to others via obvious mosaic planar isotopy moves, and any mosaics for which the tile number can easily be reduced by a simple mosaic planar isotopy. Finally, we use KnotScape to determine what knots are depicted in the mosaic be choosing the crossings so that they are alternating, as well as all possible non-alternating combinations. Doing this for all three layouts with 27 non-blank tiles, we find all knots with mosaic number 7 and tile number 27. We also find prime knots with mosaic number 6 and minimal mosaic tile number 32 but whose tile number 27 is only realized on mosaics of size 7 or larger. The following theorems summarize the results. When listing prime knots with crossing number 10 or less, we will use the Alexander-Briggs notation, matching Rolfsen's table of knots in \emph{Knots and Links} \cite{Rolfsen}.
For knots with crossing number 11 or higher, we use the Dowker-Thistlethwaite name of the knot.
See information about this naming at \emph{KnotInfo} \cite{Knotinfo}.

\begin{theorem}\label{t27-m6} The following prime knots have mosaic number 6, minimal mosaic tile number 32, and tile number 27 realized on a space-efficient 7-mosaic: $9_{10}$, $10_{11}$, $10_{20}$, $10_{21}$, and \ka{11}{341}.
\begin{figure}[h]
    \centering
    \begin{minipage}{0.17\textwidth}
        \captionsetup{skip=3pt}
        \includegraphics{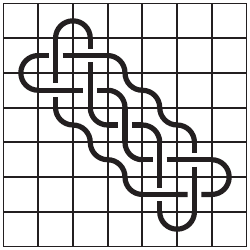}
        \caption*{$9_{10}$}
    \end{minipage} \hfill
    \begin{minipage}{0.17\textwidth}
        \captionsetup{skip=3pt}
        \includegraphics{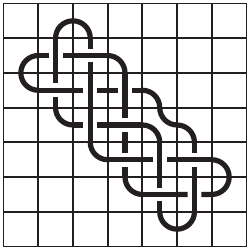}
        \caption*{$10_{11}$}
    \end{minipage} \hfill
    \begin{minipage}{0.17\textwidth}
        \captionsetup{skip=3pt}
        \includegraphics{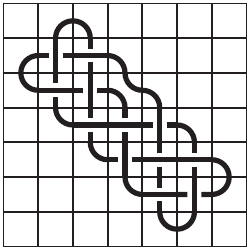}
        \caption*{$10_{20}$}
    \end{minipage} \hfill
    \begin{minipage}{0.17\textwidth}
        \captionsetup{skip=3pt}
        \includegraphics{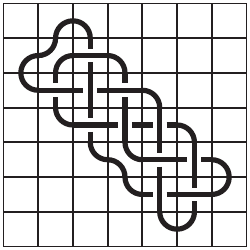}
        \caption*{$10_{21}$}
    \end{minipage} \hfill
    \begin{minipage}{0.17\textwidth}
        \captionsetup{skip=3pt}
        \includegraphics{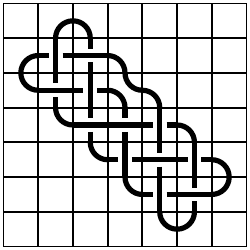}
        \caption*{\ka{11}{341}}
    \end{minipage} \hfill
\end{figure}
\end{theorem}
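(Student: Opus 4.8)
The plan is to pin down all three invariants for each of the five knots by pairing the explicit 7-mosaics produced by the exhaustive search with the lower bounds supplied by Theorem \ref{tile-numbers} and the 6-mosaic classification of \cite{Heap1} and \cite{Heap2}. The upper bound on the tile number is immediate: the five mosaics drawn beneath the statement are precisely those that emerge when the three simplest layouts of Theorem \ref{tile-numbers} are filled with the $3 \times 3$ building blocks of Figure \ref{building-blocks} together with the remaining double-arc and crossing tiles, and KnotScape identifies them as $9_{10}$, $10_{11}$, $10_{20}$, $10_{21}$, and \ka{11}{341}. Each uses $27$ non-blank tiles, so the tile number of each knot is at most $27$.

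I would first settle the mosaic number and the minimal mosaic tile number, since these feed the tile-number lower bound. Each knot appears on a 6-mosaic in the table of \cite{Heap2}, so its mosaic number is at most $6$; and since the crossing numbers here are $9$, $10$, $10$, $10$, and $11$ — all larger than anything representable on a 5-mosaic — the mosaic number is exactly $6$. For the minimal mosaic tile number, I would invoke the space-efficient 6-mosaic layouts catalogued in \cite{Heap1}: checking that each of these knots can be realized on a 6-mosaic only in the densest admissible layout, which carries $6^2 - 4 = 32$ non-blank tiles, shows that the minimal mosaic tile number is $32$. This is exactly the computation carried out for $9_{10}$ in \cite{Heap2}, now repeated for the other four knots.

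With these in hand the tile-number lower bound follows. A representation on a mosaic of size at most $5$ is impossible since the mosaic number is $6$; a 6-mosaic requires at least $32$ tiles by the previous paragraph; a 7-mosaic in which every row or every column is occupied uses at least $5n - 8 = 27$ tiles (with $n = 7$) by Theorem \ref{tile-numbers}, while one in which neither holds has an empty row and an empty column and so collapses to a 6-mosaic; and any mosaic of size $n \ge 8$ in which every row or every column is occupied carries at least $5n - 8 \ge 32$ tiles, while one in which neither is occupied collapses to a smaller mosaic. Hence no representation uses fewer than $27$ tiles, so the tile number equals $27$, realized on the exhibited 7-mosaics and strictly below the minimal mosaic tile number of $32$, witnessing the phenomenon the paper highlights.

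The main obstacle is not a single delicate inequality but the exhaustiveness and reliability of the enumeration. I must be confident that the building-block filling procedure, after the reductions by symmetry, space-efficiency, and mosaic planar isotopy and after discarding links, composite knots, and easily reducible configurations, genuinely sweeps out every prime knot realizable with $27$ tiles on a 7-mosaic, so that these five knots are correctly identified and none is missed, and that the KnotScape outputs are trustworthy. Equally delicate is verifying that the 6-mosaic layout analysis truly forbids any layout with fewer than $32$ tiles for each knot; this is the step most prone to an overlooked configuration, and it is where the bulk of the careful casework resides.
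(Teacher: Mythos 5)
Your proposal is correct and takes essentially the same route as the paper: the paper likewise relies on \cite{Heap2} for the mosaic number 6 and minimal mosaic tile number 32 of these five knots, and establishes tile number 27 by exhibiting the space-efficient 27-tile 7-mosaics shown in the theorem statement. Your extra bookkeeping (the $5n-8$ bound applied across all mosaic sizes, plus the collapsing argument when not every row or column is occupied) merely makes explicit the lower-bound reasoning that the paper leaves implicit in its appeal to Theorem \ref{tile-numbers} and \cite{Heap1}.
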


\begin{proof}  Each of the knots listed were found to have mosaic number 6 and minimal mosaic tile number 32 in \cite{Heap2}. In the statement of the theorem, we give space-efficient 7-mosaics with tile number 27 for each of these knots. So, in order to create these knots on a 6-mosaic, the fewest non-blank tiles possible is 32, while it is possible to depict these knots using only 27 non-blank tiles on a 7-mosaic.
\end{proof}



\begin{theorem}\label{t27-m7} The following prime knots have mosaic number 7 and tile number 27:
\begin{enumerate}
  \item $9_6$, $9_{15}$, $9_{18}$,
  \item $10_{5}$, $10_{6}$, $10_{7}$, $10_{8}$, $10_{9}$, $10_{10}$, $10_{13}$, $10_{14}$, $10_{15}$, $10_{16}$, $10_{17}$, $10_{18}$, $10_{19}$, $10_{24}$, $10_{25}$, $10_{26}$, $10_{29}$, $10_{30}$, $10_{31}$, $10_{32}$, $10_{33}$, $10_{35}$, $10_{36}$, $10_{38}$, $10_{39}$,
  \item \ka{11}{90}, \ka{11}{93}, \ka{11}{119}, \ka{11}{145}, \ka{11}{180}, \ka{11}{184}, \ka{11}{185}, \ka{11}{192}, \ka{11}{203}, \ka{11}{205}, \ka{11}{210}, \ka{11}{226}, \ka{11}{306}, \ka{11}{307}, \ka{11}{308}, \ka{11}{309}, \ka{11}{311}, \ka{11}{333}, \ka{11}{336}, \ka{11}{337}, \ka{11}{363},
  \item \ka{12}{541}, \ka{12}{601}, \ka{12}{1024}, \ka{12}{1034}, \ka{12}{1126}, and
  \item \ka{13}{4304}.
\end{enumerate}
\end{theorem}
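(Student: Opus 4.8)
The plan is to establish, for each knot $K$ in the list, two separate assertions—that its mosaic number equals $7$ and that its tile number equals $27$—each of which splits into a matching upper and lower bound. First I would handle the upper bounds simultaneously by exhibiting an explicit space-efficient $7$-mosaic with exactly $27$ non-blank tiles for each $K$. Following the method described before the theorem, I would restrict attention to the three layouts of Theorem \ref{tile-numbers} that use $27$ tiles, fill each of the two $3 \times 3$ corner submosaics with one of the building blocks of Figure \ref{building-blocks} (and its rotations), and complete the remaining nondeterministic tiles with double-arc or crossing tiles subject to using at least nine crossings. Running KnotScape \cite{Thistle} first on the alternating choice of crossings and then on every non-alternating combination identifies each resulting knot; collecting the output over all admissible fillings of all three layouts should produce precisely the list in the statement, after discarding links, composite knots, and duplicates equivalent under symmetry or an obvious planar isotopy. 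Each such mosaic simultaneously witnesses that the mosaic number of $K$ is at most $7$ and that its tile number is at most $27$.

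For the lower bounds I would argue from the $5n-8$ bound of \cite{Heap1}. Since every prime knot with mosaic number $6$ or less is catalogued in the supplement to \cite{Heap2}, and none of the knots in the list appears there, the mosaic number of each $K$ is at least $7$; combined with the $7$-mosaic just constructed, the mosaic number is exactly $7$. For the tile number, suppose $K$ were drawn on some $n$-mosaic with fewer than $27$ non-blank tiles. Deleting empty rows and columns preserves the knot type and reduces to a mosaic in which every row and every column is occupied, say of size $k \le n$. If $k \le 6$ this contradicts the mosaic number being $7$; if $k = 7$ it violates the lower bound $5k-8 = 27$; and if $k \ge 8$ then $5k-8 \ge 32 > 27$, again a contradiction. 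Hence the tile number is at least $27$, and with the construction above it equals $27$.

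The main obstacle is not any single deduction but the completeness and correctness of the computer-assisted enumeration. I must be confident that the reductions by symmetry, by space-efficiency, and by the restriction to nine or more crossing tiles discard no filling that yields a genuinely new knot, and that every surviving filling has been passed through KnotScape in all of its alternating and non-alternating crossing assignments. Equally, the eliminations—of links, of composite knots, and of diagrams whose tile count is reducible by a planar isotopy—must be applied uniformly, so that the final list neither omits a knot realizable in a $27$-tile layout nor includes one that is not. That these three layouts genuinely exhaust the $27$-tile case rests on Theorem \ref{tile-numbers}, whose proof is deferred to the later section; granting that, the remaining work is careful bookkeeping of the enumeration rather than any further structural argument.
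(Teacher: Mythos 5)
Your proposal is correct and follows essentially the same route as the paper: the paper's proof likewise consists of exhibiting an explicit space-efficient 27-tile 7-mosaic for each listed knot (the table in Section \ref{table-of-mosaics}) and deducing the lower bounds from the fact that these knots have mosaic number 7 (they are absent from the classification of mosaic number $\leq 6$ knots in \cite{Heap2}) together with the $5n-8$ bound of \cite{Heap1}. The only remark worth making is that the completeness of the KnotScape enumeration, which you identify as the main obstacle, is not actually required for this theorem: the statement does not claim the list is exhaustive, so only the correctness of each exhibited mosaic and of its knot identification matters.
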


\begin{proof}  We have given space-efficient mosaics with tile number 27 for each of these knots in the table of knots in Section \ref{table-of-mosaics}. Since the mosaic number of each of these knots is 7, we know that they cannot have a tile number smaller than 27.
\end{proof}

Finally, we point out that all of the knot mosaics for the knots listed in Theorem \ref{t27-m7} come from the first layout given in Theorem \ref{tile-numbers}. Neither of the next two layouts, also with 27 non-blank tiles, resulted in any knots that the first layout did not. This is analogous to what we see with 6-mosaics, where there are two space-efficient layouts with 22 non-blank tiles, but the second layout led to the same results as the first. See \cite{Heap2} for more information.
%

\section{Useful Observations and the Proof of Theorem \ref{tile-numbers}}\label{proof-of-theorem}

In this section, we will prove Theorem \ref{tile-numbers}. As we progress toward this goal, we first provide some useful terms and observations that we will make use of as we attempt to create space-efficient mosaics and count the number of non-blank tiles needed to create them.


Suppose there are two adjacent single arc tiles, $T_1$, $T_2$, $T_3$, or $T_4$, that share a connection point, and the other connection points enter the same adjacent row or column. The four options are shown in Figure \ref{caps}, and we will refer to these collectively as \emph{caps} and individually as \emph{top caps}, \emph{right caps}, \emph{bottom caps}, and \emph{left caps}, respectively. We will encounter mosaics that have pieces that are similar to caps but with line segment tiles, $T_5$ or $T_6$, between the arc tiles or pieces that can be easily changed to these via planar isotopy moves. We will call these \emph{reducible caps} and define them to be a collection of suitably connected tiles with no crossing tiles that are, or can be changed via planar isotopy moves to, two single arc tiles that are connected by line segment tiles and whose other connection points enter the same adjacent row or column. Essentially, a reducible cap can meander around but can easily be reduced to a reducible cap that does not meander, such as the last example in Figure \ref{caps}, which could be simplified to the fourth example.

\begin{figure}[h]
  \centering
  \includegraphics{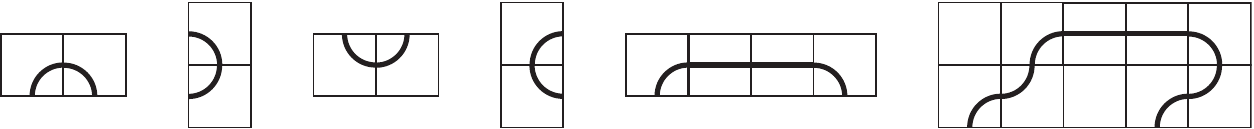}\\
  \caption{Caps and reducible caps.}
  \label{caps}
\end{figure}

Equipped with this terminology, we consider the following observations that will assist us in counting the minimum number of non-blank tiles necessary to create knot mosaics. More general versions of the first three observations were proven in \cite{Heap1}, and we include them here, without proof, specifically applied to prime knots. The fourth observation is a mosaic planar isotopy that we make use of several times that can reduce the number of non-blank tiles used in a mosaic. The fifth observation is simply a reminder of how to recognize when a mosaic does not represent a prime knot.

This first observation tells us that we can create all of our space-efficient knot mosaics without using the corner tile locations, the first and last tiles in the first row and last row of the mosaic. Because the the outer rows and columns need not be occupied, we may assume that the first tile and the last tile in the first occupied row and column is a blank tile, and similarly for the last occupied row and column.

\begin{observation}(Lemma 3.1, \cite{Heap1})\label{no-corners}
We can assume that the corner tiles of any space-efficient knot mosaic are blank $T_0$ tiles. Furthermore, for a space-efficient knot mosaic, the first and last tile location of the first and last occupied row and column are blank (or can be made blank via a planar isotopy move that does not change the tile number).
\end{observation}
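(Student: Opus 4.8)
The plan is to determine, purely from the suitably-connected condition, which tiles can sit in a corner, and then to clear any non-blank corner tile with a single tile-count-preserving planar isotopy. First I would record the boundary constraint. If $r_1$ is the first occupied row, then every row above it is blank, so no tile in row $r_1$ can carry a connection point on its top edge: such a point would have to meet a connection point of a blank (or nonexistent) tile in row $r_1-1$, contradicting suitable connectedness. The symmetric arguments show that tiles in the last occupied row have no bottom connection point, tiles in the first occupied column have no left connection point, and tiles in the last occupied column have no right connection point. In particular, the tile at the corner $(r_1,c_1)$ of the occupied block has connection points only on its right and bottom edges. Since every non-blank tile of Figure \ref{tiles} has either two or four connection points, and $T_7,\dots,T_{10}$ occupy all four edges, this corner tile is either blank or the single-arc tile joining its right and bottom edges; the other three corners are handled by the symmetric statements. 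This also settles the first claim of the Observation: if a grid corner, say $(1,1)$, is non-blank, then row $1$ and column $1$ are occupied, so $r_1=1$, $c_1=1$, and the grid corner is exactly the corner of the occupied block.

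Next I would remove the arc. Because the first occupied row carries no top connection points, its non-blank tiles are forced to be horizontal segments ($T_5$) or single-arc turns, and hence assemble into downward-opening \emph{caps} (possibly padded by segments, possibly meandering as \emph{reducible caps} in the terminology of Figure \ref{caps}); the corner arc is the extreme tile of the leftmost such cap, and symmetrically along the first occupied column. I would then apply a cap-sliding planar isotopy of the kind illustrated in Figure \ref{isotopy-example} that retracts this cap off the corner tile and one step into the interior. The substance here is a finite case check on the two neighbors of the corner, each of which — by the boundary constraint above — is itself either a segment or a single arc: in each configuration the move only relocates existing arc tiles rather than introducing new ones, so the number of non-blank tiles does not increase, and after first collapsing any meander via the reducible-cap reduction the corner tile is vacated.

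Finally I would invoke space-efficiency. Since the original mosaic already uses the minimum possible number of non-blank tiles, the non-increasing move of the previous step cannot strictly decrease the count; hence it preserves it, and the resulting equivalent mosaic is still space-efficient but has a blank corner. Each such move is supported near a single corner, so the four corners may be cleared independently to obtain the stated normal form.

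I expect the main obstacle to be the case analysis in the middle step. One must verify that the retraction move is genuinely available for every admissible pair of neighbor tiles (segment--segment, segment--arc, arc--segment, arc--arc) and that it does not create a new reducible crossing — which, while it would not violate the statement, must be tracked to keep the mosaic within the reduced class — and one must treat the meandering reducible caps with care so that the phrase \emph{does not change the tile number} holds exactly, rather than merely up to the cap reduction.
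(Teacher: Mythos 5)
Your proposal cannot be compared line-by-line with anything in this paper, because the paper deliberately gives no proof of Observation \ref{no-corners}: it is quoted from Lemma~3.1 of \cite{Heap1}, and the authors state explicitly that the first three observations are included ``without proof,'' specialized to prime knots. Judged on its own merits, and against the evident strategy of the cited lemma, your reconstruction takes the natural and correct route: suitable connectedness forbids connection points on the outward edges of the first and last occupied rows and columns, which forces an occupied-block corner to be blank or the single arc turning into the interior; a local planar isotopy then clears such an arc without increasing the number of non-blank tiles; and space-efficiency upgrades ``non-increasing'' to ``tile-number preserving.'' Your first and third steps are complete, and the reduction of the grid-corner claim to the occupied-block claim (a non-blank grid corner forces $r_1=1$ and $c_1=1$) is exactly right.

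The one substantive caution concerns the middle step, which you rightly flag as the real work, but whose branches do not all close in the way you describe. In the arc--arc neighbor case, the corner arc and its two arc neighbors wrap around through the diagonal tile $(r_1+1,c_1+1)$, and no move that ``only relocates existing arc tiles'' vacates the corner; instead, these configurations are impossible under the hypotheses. A crossing in the diagonal position makes the wrap a Reidemeister~I kink, so the mosaic is not reduced and hence not space-efficient; an arc or double arc there that closes the loop produces a second component, so the mosaic depicts a link rather than a knot; and the pass-through double arc gives a meander whose reduction removes three non-blank tiles, contradicting minimality. The mixed segment--arc cases are similarly vacuous (their U-turns slide inward with a loss of two tiles), as is segment--segment with a non-blank diagonal tile (the reroute merges into a double arc and loses one tile). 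The only configuration that actually survives in a space-efficient mosaic is segment--segment with a blank diagonal tile, where your diagonal reroute (one arc plus two segments becoming three arcs) preserves the count and clears the corner. So the case analysis does close, but several branches close by showing the configuration cannot occur in a reduced, single-component mosaic rather than by exhibiting a count-preserving move; the reducedness built into space-efficiency and the knot-not-link hypothesis do real work that your sketch only gestures at.
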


\begin{observation}(Lemma 3.3, \cite{Heap1})\label{four-below} Suppose we have a space-efficient knot mosaic. If there is a cap or reducible cap in any row (or column), then the two tiles that share connection points with the cap must have four connection points. Examples are shown in Figure \ref{four-below-1}.
\end{observation}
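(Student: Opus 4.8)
The plan is to argue by contradiction, using symmetry to collapse to a single local picture and then showing that a two--connection--point tile beneath a cap leg always lets us either strictly reduce the tile count or violate the hypothesis that $K$ is a prime knot. First I would invoke the rotational and reflective symmetry of an $n$-mosaic to assume the cap is a \emph{top cap} occupying tiles in positions $(i,j)$ and $(i,j+1)$, so that its two legs point downward into $(i+1,j)$ and $(i+1,j+1)$; these are precisely the two tiles that share connection points with the cap. If instead we begin with a reducible cap, the planar isotopy moves in its definition straighten it to a genuine cap without changing the number of non-blank tiles (so space-efficiency is preserved), and it suffices to treat the genuine cap. The single structural fact driving everything is that a non-blank tile has either two or four connection points, and that the four--connection--point tiles $T_7$--$T_{10}$ carry a connection point on \emph{all four} sides.

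Assuming for contradiction that one of these tiles, say $(i+1,j)$, has only two connection points, note it is non-blank (it meets the left leg) and its top connection point is matched to that leg, so its second connection point lies on the bottom, left, or right edge. This yields three cases: a vertical segment $T_6$, an arc curving outward (away from the cap), or an arc curving back underneath the cap. I would dispatch the inward-curving arc first: tracing the strand shows it either closes up with the right leg into a small circle---forcing $K$ to be split or the unknot, contrary to $K$ being a prime knot---or, when it abuts a crossing, exhibits a reducible crossing, contradicting that the mosaic is reduced. For the segment and outward-arc cases the goal is a tile-count reduction. Here I would use suitable-connectedness together with the all-four-sides property of $T_7$--$T_{10}$ to argue that the neighbor on the far side of the leg cannot itself be a four--connection--point tile (such a tile would demand a matching connection point that the two--point tile fails to provide), so the turnaround is forced to propagate through a chain of two--connection--point tiles. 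That chain is exactly a reducible cap, and sliding the whole cap down past it by a planar isotopy---the analogue of the move in Figure~\ref{isotopy-example}---vacates tiles and strictly decreases the number of non-blank tiles, contradicting space-efficiency.

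The main obstacle is the bookkeeping in this last step: making the cap-lowering isotopy explicit and checking that it removes at least one non-blank tile in \emph{every} sub-configuration rather than merely relocating tiles, and in particular handling the asymmetric situation in which one leg meets a two--point tile while the other meets a four--point tile, where suitable-connectedness must be pushed to produce either a forbidden loop or a genuine reduction. Termination of the propagation is not a difficulty, since each step moves the turnaround strictly downward (or outward) inside a finite mosaic, and a closed strand cannot escape through the boundary once Observation~\ref{no-corners} places the occupied tiles away from the border; thus the strand must eventually reach a four--connection--point tile, close into a loop, or force a reduction. Once every case terminates in a contradiction, both $(i+1,j)$ and $(i+1,j+1)$ must have four connection points, as claimed, and the forced double-arc or crossing tiles are exactly those illustrated in Figure~\ref{four-below-1}.
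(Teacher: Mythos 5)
First, a point of comparison that matters here: this paper contains \emph{no} proof of Observation \ref{four-below}. It is imported as Lemma 3.3 of \cite{Heap1}, and the text says explicitly that the first three observations are included ``without proof,'' so your proposal can only be judged on its own merits, not against an in-paper argument. On those merits, the overall shape of your plan is the right genre: normalize to a top cap by symmetry, straighten a reducible cap first, then suppose a leg meets a two-connection-point tile and force either a primeness/reducedness violation or a strict drop in the number of non-blank tiles. Your three-way enumeration of two-point tiles carrying a north connection point (vertical segment $T_6$, outward arc, inward arc) is also complete.

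However, two steps would fail as written. In the inward-arc case you allow only ``closes into a small circle'' or ``abuts a crossing, giving a reducible crossing,'' but there is a third sub-case: the tile under the other leg can be a double arc whose arcs join north--east and south--west. Then nothing closes up and no crossing is present, yet the configuration must still be excluded, and the exclusion is a tile-count reduction (the meander through the cap re-routes through the two tiles of the row below, vacating the cap row) --- exactly the type of argument you reserved for the other cases, so your dichotomy does not cover it. Second, the propagation step rests on a false claim: the neighbor on the far side of a two-point tile \emph{can} be a four-connection-point tile (an outward arc under the left leg has a west connection point, so it can feed directly into a double-arc or crossing tile; likewise a vertical segment can sit atop one), and the chain of two-point tiles hanging off the cap is generally \emph{not} a reducible cap in this paper's sense, because the definition requires the two free connection points to enter the same adjacent row or column, which the two propagating strands need not do. So the single ``slide the cap past the reducible cap'' move you invoke does not exist as such; what is required is an explicit, case-by-case re-routing isotopy with a verified tile-count decrease --- precisely the bookkeeping you defer as ``the main obstacle.'' Since that bookkeeping is the entire content of the lemma, the proposal is a plan rather than a proof. (A minor further slip: Observation \ref{no-corners} only blanks the extreme corner positions; it does not keep strands away from the border, though termination of your propagation is automatic in a finite mosaic anyway.)
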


\begin{figure}[h]
  \centering
  \includegraphics{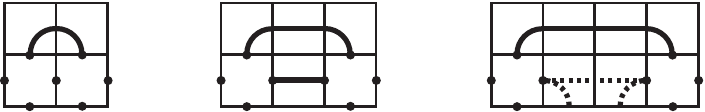}\\
  \caption{Tiles that share connection points with caps must have four connection points.}
  \label{four-below-1}
\end{figure}

\begin{observation}(Lemma 3.6, \cite{Heap1})\label{top-caps} Suppose we have a space-efficient 7-mosaic. Then the first occupied row of the mosaic can be simplified so that the only non-blank tiles form either one or two top caps. Similarly, the last occupied row is made up of bottom caps, and the first and last occupied columns are made up of left caps and right caps, respectively. Although it is not explicitly stated in \cite{Heap1}, the proof there shows that every reducible cap can be simplified to a cap without increasing the number of non-blank tiles.
\end{observation}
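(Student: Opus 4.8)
The plan is to analyze the first occupied row directly from the \emph{suitably connected} condition and then propagate to the other three sides by symmetry. Write the topmost non-blank row as row $i$. Since every row above row $i$ is blank, no tile in row $i$ may carry a connection point on its \emph{top} edge; otherwise that connection point would have to meet a non-blank tile in row $i-1$, contradicting the choice of $i$. First I would enumerate the tiles with no top connection point: the blank tile $T_0$, the horizontal segment $T_5$, and the two single arcs whose connection points are $\{\text{bottom},\text{left}\}$ and $\{\text{bottom},\text{right}\}$. Every other tile ($T_6$, the two remaining single arcs, the double arcs, and the crossings $T_9,T_{10}$) has a top connection point and is therefore forbidden in row $i$.

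Next I would read off the horizontal structure. Group the non-blank tiles of row $i$ into maximal horizontally connected runs. The left end of a run cannot have a left connection point (it would dangle into a blank), so it must be the $\{\text{bottom},\text{right}\}$ arc; symmetrically the right end must be the $\{\text{bottom},\text{left}\}$ arc; and every interior tile must carry both a left and a right connection point, forcing it to be $T_5$. Thus each run is exactly an arc, then zero or more copies of $T_5$, then an arc --- that is, a cap or a reducible cap. To bound the number of runs I would invoke Observation \ref{no-corners}: the first and last tiles of row $i$ are blank, so all non-blank tiles lie in the five interior columns. Since each cap or reducible cap occupies at least two columns, at most two of them fit, and since row $i$ is occupied there is at least one. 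Rotating the mosaic by multiples of $90^\circ$ carries the topmost occupied row to the bottom row and to the first and last occupied columns, yielding the statements about bottom caps, left caps, and right caps.

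The main obstacle is the final claim, that each reducible cap can be straightened to a genuine cap without increasing the tile count. Here I would use Observation \ref{four-below}: the two tiles directly beneath the feet of a cap or reducible cap have four connection points. Given a reducible cap with at least one $T_5$, I would perform the planar isotopy that ``pulls in the slack,'' moving one foot inward by a single column, deleting the adjacent $T_5$, and sliding the vertical strand descending from that foot over by one column into the now-supported position beneath. Each such move removes one $T_5$ while rerouting a strand, so it does not increase the tile count (any turn introduced along the strand's new path is offset by the deleted segment), and iterating removes all segments and leaves a cap. The delicate point, and the part that requires genuine case analysis rather than a single local move, is verifying that the descending strand can always be jogged over without colliding with other parts of the knot; this is exactly where the four-connection condition below the cap and the detailed local configurations from \cite{Heap1} are needed, and I would organize the argument as a finite check over the possible tiles occupying the column into which the strand is moved.
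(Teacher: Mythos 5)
A preliminary remark: the paper itself offers no proof of this observation --- it is imported, explicitly ``without proof,'' as Lemma 3.6 of \cite{Heap1} --- so your attempt can only be judged on its own merits and against that external argument. Your opening steps (no tile in the first occupied row may carry a top connection point, hence only blanks, horizontal segments, and the two downward-facing single arcs can appear there) are correct and are the natural starting point. But your structural claim about maximal runs is false: you assert that every interior tile of a run must have both a left and a right connection point and hence be $T_5$. Suitable connectedness only requires that every connection point \emph{present} be met; it does not force adjacent non-blank tiles to share connection points. An arc whose connection points are on the bottom and left edges may sit immediately to the left of an arc whose connection points are on the bottom and right edges, so a single run can consist of two caps laid side by side with no blank or segment between them. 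This configuration is not only legal, it occurs: it is the first option in Figure \ref{first-row-caps} (the ``two consecutive top caps'' of Case 1 in the proof of Lemma \ref{no-segments}), and your parsing would exclude it. The error is repairable --- the correct statement is that each run is a concatenation of caps and reducible caps, and the bound ``at most two caps in the five interior columns'' survives --- but as written the claim is wrong.

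The more serious gap concerns the final sentence of the observation, which is its real content: every reducible cap can be simplified to a cap without increasing the number of non-blank tiles. Here you describe a ``pull in the slack'' move but explicitly defer its justification, conceding that one must verify the descending strand can be rerouted without colliding with the rest of the knot and that this requires a finite case check you never carry out. That check is the whole difficulty: rerouting a strand one column over can a priori force new non-blank tiles in the rows below, and your parenthetical assertion that ``any turn introduced along the strand's new path is offset by the deleted segment'' is precisely what must be proved, not assumed. (Compare Lemma \ref{no-segments} of the paper, whose analogous claim one row deeper requires several pages of case-by-case figures.) Appealing at this point to ``the detailed local configurations from \cite{Heap1}'' amounts to invoking the very source whose lemma you are trying to reprove. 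So your attempt establishes the easy half --- the first occupied row consists of at most two caps or reducible caps --- but leaves the reduction from reducible caps to genuine caps, and hence the observation itself, unproved.
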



\begin{observation}\label{common-isotopy} The mosaic planar isotopy move given in Figure \ref{isotopy} does not change the knot type of the knot mosaic. The two tiles with four connection points rotate into the tile positions to the right, with the upper tile rotating 90 degrees counterclockwise and the lower tile rotating 90 degrees clockwise.  The number of non-blank tiles in the resulting mosaic, after applying this isotopy, is always less than or equal to the number of non-blank tiles in the original mosaic.
\begin{figure}[h]
  \centering
  \includegraphics{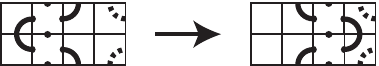}\\
  \caption{This mosaic planar isotopy move does not increase the number of non-blank tiles in a knot mosaic.}
  \label{isotopy}
\end{figure}
\end{observation}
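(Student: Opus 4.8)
The plan is to prove the two assertions of the observation separately: that the move of Figure~\ref{isotopy} preserves the knot type, and that it does not increase the number of non-blank tiles. Both are local claims about the small region pictured, so the first thing I would check is that the move is genuinely local --- that every connection point on the boundary of the affected region is identical before and after. Granting this, the tiles outside the region are untouched, and it suffices to analyze the region in isolation.

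For the knot-type assertion, I would verify that the left-hand and right-hand configurations in Figure~\ref{isotopy} represent ambient isotopic tangles relative to their boundary connection points. The cleanest way to do this is to exhibit the move as a composition of the elementary mosaic planar isotopy moves catalogued in \cite{Lom-Kauff} and \cite{Kuriya}: one slides each of the two strands carried by the four-point tiles past the arc to its right, which is exactly a sequence of the allowed $2 \times 2$ replacements. Since each elementary move preserves knot type, so does their composition. The rotations recorded in the statement --- upper tile $90^\circ$ counterclockwise, lower tile $90^\circ$ clockwise --- are simply a bookkeeping description of where the two four-point tiles end up after these slides, and I would confirm that the rotated tiles are again suitably connected to their new neighbors.

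For the tile-count assertion, the key observation is that the two four-point tiles remain four-point tiles: they are merely rotated and shifted one column to the right, so they contribute the same amount to the count before and after. Hence the net change in the number of non-blank tiles is determined entirely by the arc tiles that the strands are pulled across, together with the positions the four-point tiles vacate. I would compare these directly: the arcs occupying the destination positions are absorbed into the relocated four-point tiles, while the vacated positions close up into a cap (or into blank tiles). In the generic configuration the arc tiles are simply relocated and the count is unchanged; when two arcs on one side merge into a single cap on the other, the count strictly drops, which is the source of the inequality rather than an equality.

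The main obstacle is the tile-count bookkeeping rather than the knot-type invariance, because the outcome depends on what already occupies the neighboring positions --- whether they are blank, part of a cap, or further four-point tiles. I would therefore organize the count into a short case analysis over these possibilities and check the inequality ``$\leq$'' in each, identifying precisely the cases in which it is strict. Observations~\ref{four-below} and \ref{top-caps} restrict which neighboring configurations can occur in a space-efficient mosaic, so only a handful of cases need be examined, and the verification in each amounts to a direct inspection of the corresponding before-and-after pictures.
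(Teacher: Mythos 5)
The paper offers no proof of this observation at all: it is stated as a fact to be checked by direct inspection of Figure \ref{isotopy}, whose before-and-after pictures completely determine the local configuration, and the surrounding text only remarks that it is a planar isotopy move used repeatedly later. Your proposal therefore does genuinely more than the paper, and its core is sound and is the natural way to make the observation rigorous: check that the boundary connection points of the affected region agree before and after, realize the move as a composition of the elementary $2\times 2$ mosaic planar isotopy moves of \cite{Lom-Kauff} and \cite{Kuriya} (which is exactly what licenses calling it a ``mosaic planar isotopy move''), and compare the two pictures tile by tile, noting that a tile with four connection points still has four connection points after a quarter turn, so any change in the count comes only from arc, segment, and blank tiles. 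This buys a formal justification that the paper's inspection-based treatment does not supply.

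One caution about your last paragraph, which is where your plan drifts from the statement actually being proved. You propose pruning the tile-count case analysis using Observations \ref{four-below} and \ref{top-caps}, on the grounds that they ``restrict which neighboring configurations can occur in a space-efficient mosaic.'' That quietly imports a space-efficiency hypothesis the observation does not make: the inequality is asserted for \emph{every} knot mosaic containing the pictured configuration, and the paper applies the move precisely to candidate mosaics in order to conclude that they are \emph{not} space-efficient (in Case 2 of Lemma \ref{no-segments} and in the alternative completions of the outer shells in the proof of Theorem \ref{tile-numbers}). Restricting the verification to space-efficient ambient mosaics would make those applications circular, or at best would prove a weaker statement than the one claimed. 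The fix is easy and shortens your argument: both sides of the move are fully specified by Figure \ref{isotopy} (up to the nondeterminacy of the four-connection-point tiles, which rotation preserves), so the count comparison is a single direct check of the two local pictures and needs no hypotheses on the surrounding mosaic at all.
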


\begin{observation}\label{composite-knot} If a knot mosaic can be separated into two nontrivial, space-efficient pieces connected by exactly two connection points, then the knot mosaic represents a composite knot. An example of this can be seen in Figure \ref{reducible}.
\end{observation}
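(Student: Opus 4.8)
The plan is to reduce this to the classical characterization of composite knots: a knot is a connected sum $K_1 \# K_2$ of two nontrivial knots precisely when the projection plane contains a simple closed curve meeting the knot diagram transversely in exactly two points with nontrivial knotting on each side. So the entire task is to manufacture such a separating curve from the hypothesized splitting of the mosaic and then verify that the two factors it produces are both nontrivial.

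First I would fix notation: let $P_1$ and $P_2$ be the two pieces and let $p$ and $q$ be the two connection points at which they are joined. Since the two pieces meet only at $p$ and $q$, their drawn strands are disjoint away from these points, and I can draw a simple closed curve $\gamma$ passing through $p$ and $q$ that separates the plane into two regions, one containing $P_1$ and the other containing $P_2$, with $\gamma$ meeting the knot diagram transversely in exactly the two points $p$ and $q$. Viewing the diagram on the sphere $S^2 = \mathbb{R}^2 \cup \{\infty\}$, the curve $\gamma$ bounds two disks $D_1 \supset P_1$ and $D_2 \supset P_2$.

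Next I would close off each tangle. Join $p$ to $q$ by a simple embedded arc $\alpha$ inside $D_2$ (missing $P_2$) and by a simple embedded arc $\beta$ inside $D_1$ (missing $P_1$). Then $P_1 \cup \alpha$ is a closed curve forming a knot $K_1$, and $P_2 \cup \beta$ forms a knot $K_2$, and by construction the original knot $K$ is recovered as $K_1 \# K_2$: this is exactly the standard connected-sum picture, with $\gamma$ playing the role of the connecting sphere and $\alpha, \beta$ the trivial completing arcs. Because each of $P_1$ and $P_2$ is assumed nontrivial, the closures $K_1$ and $K_2$ are both nontrivial knots, and hence $K$ is composite; the mosaic of Figure \ref{reducible} is an explicit instance, with $\gamma$ separating the two obvious knotted tangles.

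The step I expect to require the most care is the justification that the completing arcs $\alpha$ and $\beta$ genuinely realize the connected-sum factors without creating or concealing knotting, i.e. that the closure of each piece depends only on the tangle and not on how the arc is routed. This follows because $\gamma$ separates the two pieces and each arc is embedded in a disk meeting the diagram only at $p$ and $q$, so any two choices of completing arc are isotopic rel endpoints within that disk; this is also where the hypothesis of \emph{exactly} two connection points does its work, since a curve meeting the diagram in more points, or a trivial piece on one side, would break the decomposition. I would also remark that space-efficiency guarantees each nontrivial piece is an honestly knotted tangle, with no removable crossings masquerading as knotting, so that \emph{nontrivial piece} is precisely the condition that $K_i$ is a nontrivial factor.
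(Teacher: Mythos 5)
Your proof is correct, but note that the paper never proves Observation \ref{composite-knot} at all: it is introduced explicitly as ``simply a reminder of how to recognize when a mosaic does not represent a prime knot,'' i.e., the authors take the standard diagrammatic characterization of connected sums for granted. What you have written is precisely the missing standard argument --- a simple closed curve through the two connection points separating the pieces, closure of each tangle by an arc in the complementary disk, well-definedness of those closures up to isotopy, and identification of the depicted knot with the connected sum of the two closures --- so your proposal fills a gap the paper deliberately leaves open rather than paralleling or diverging from an existing proof. One point worth flagging: for the statement to be literally true, ``nontrivial piece'' must be read as you read it, namely that the closure of the piece is a nontrivial knot. Your closing remark that space-efficiency by itself certifies this is the only shaky step, since space-efficiency of a piece (non-blank tiles cannot be reduced by mosaic planar isotopies) does not in principle rule out a hard diagram of the unknot whose crossings cannot be removed without first increasing the tile count; this does not affect the validity of your proof under your stated reading of ``nontrivial,'' but the side remark should not be leaned on as a rigorous equivalence.
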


As we have previously noted, the use of the horizontal and vertical segment tiles, $T_5$ and $T_6$, are not necessary for create space-efficient mosaics. Once we show this fact, proving Theorem \ref{tile-numbers} is simple. Observation \ref{top-caps} tells us that $T_5$ and $T_6$ tiles are not needed in the first and last occupied rows and columns. Next, we show that they are not needed in the second and penultimate occupied rows and columns.

\begin{lemma}\label{no-segments} Suppose we have a space-efficient 7-mosaic of a prime knot. Then the mosaic can be simplified so that there are no horizontal or vertical line segment tiles in the second occupied row/column and the next-to-last occupied row/column.
\end{lemma}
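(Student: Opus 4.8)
The plan is to focus attention on the second occupied row (the other three cases following by the symmetry of the mosaic under rotation and reflection) and to argue that any horizontal or vertical segment tile appearing there can be eliminated without increasing the number of non-blank tiles, so that space-efficiency is preserved. By Observation \ref{top-caps}, I may assume the first occupied row consists of one or two top caps, each cap being two single arc tiles sharing a connection point. By Observation \ref{four-below}, each tile in the second occupied row that sits directly beneath the shared connection point of a cap must have four connection points, hence is a double-arc tile or a crossing tile. The key structural fact to extract is therefore that the tiles of the second occupied row are of two kinds: those locked to be four-connection-point tiles beneath a cap, and the remaining tiles, which is where a stray $T_5$ or $T_6$ could hide.

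First I would set up coordinates and walk along the second occupied row tracking connection points with the row above. A vertical segment tile $T_6$ in this row would have connection points on its top and bottom edges; its top connection point must meet a connection point of the first occupied row, but a top cap presents connection points to the row below only at the two outer legs of the cap, not in a configuration that a lone vertical segment can suitably connect to while keeping the diagram reduced. So I expect vertical segments to be ruled out quickly by a local connection-point count, possibly after an application of Observation \ref{no-corners} to clear the end tiles. The substantive case is a horizontal segment $T_5$ sitting in the second occupied row. Here the strategy is to apply the planar isotopy of Observation \ref{common-isotopy}: a $T_5$ adjacent to a four-connection-point tile, together with the cap tile above, forms exactly the kind of configuration that the move in Figure \ref{isotopy} rotates, pushing the arc structure down and absorbing the segment. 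Since that observation guarantees the move never increases the tile count, each application either deletes the segment outright or slides it toward the boundary where Observation \ref{top-caps} finishes the job.

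The argument would then be organized as an induction (or a finite descent) on the number of segment tiles in the second occupied row: I show that whenever such a tile is present, there is a planar isotopy move — either the one of Observation \ref{common-isotopy} or a direct reduction of a reducible cap via Observation \ref{top-caps} — that strictly decreases the number of segment tiles in that row while not increasing the total number of non-blank tiles, and without changing the knot type. Because the mosaic is only $7 \times 7$, only finitely many segment tiles can occur, so the process terminates in a mosaic with no segment tiles in the second occupied row. Primality of the knot enters through Observation \ref{composite-knot}: I must verify that none of these reductions inadvertently pinches the diagram into two pieces joined by two connection points, which would signal a composite knot and contradict the hypothesis; keeping the reductions local to a single cap-and-row configuration should prevent this. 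Applying the same argument to the last occupied row and to the first and last occupied columns, via the dihedral symmetry of the square mosaic, yields the statement for all four of the named rows and columns.

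The main obstacle I anticipate is the case analysis for the horizontal segment: after the first occupied row is reduced to one or two top caps, the second occupied row can interact with the caps in several ways depending on whether a given tile lies beneath a shared cap connection point, beneath a cap leg, or beneath a blank tile, and I will need to check that in every such local pattern the isotopy of Observation \ref{common-isotopy} is actually applicable (i.e. that the tile to the right of the $T_5$ genuinely has four connection points, as required by the hypotheses of that move). Handling the boundary subcase, where the segment sits at the end of the occupied row so there is no four-connection-point neighbor to rotate against, is the delicate point; there I expect to fall back on Observation \ref{no-corners} and the cap-simplification of Observation \ref{top-caps} to reshape the end of the row before invoking the isotopy.
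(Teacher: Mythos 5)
There is a genuine gap in your plan, and it sits exactly where the real work of the lemma lies. Your treatment of vertical segments is fine and matches the paper: Observation \ref{four-below} forces four-connection-point tiles under the cap legs, and no other tile of the second row can receive a connection point from above, so $T_6$ is ruled out by a local count. But for horizontal segments your strategy is a descent in which every $T_5$ in the second row is removed by a \emph{local} move --- the isotopy of Observation \ref{common-isotopy} or a cap reduction --- that never increases the tile count. That inductive step is not true in general, and the paper does not argue this way. Consider the configuration in which the first occupied row has two top caps separated by a blank tile (one of the four possible first rows): the entire second row is then \emph{forced}, and it necessarily contains a $T_5$ in the middle position, flanked on both sides by four-connection-point tiles. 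No application of Observation \ref{common-isotopy}, and no reshaping of the first row, removes that segment while leaving the rest of the mosaic intact. The paper's resolution is global: it shows that \emph{every completion} of this partial mosaic (tracking the middle column down through the fourth and fifth rows, then the surrounding tiles) is either not space-efficient --- because one can collapse an entire column of segments or slide a whole quadrant, strictly reducing the non-blank tile count --- or fails to be prime by Observation \ref{composite-knot}. In other words, the conclusion is not ``the segment can be isotoped away'' but ``a space-efficient prime mosaic containing such a segment cannot exist,'' which is a proof by contradiction requiring enumeration of completions far beyond the second row.

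The same issue recurs in the single-cap cases: the paper must enumerate fourteen partial completions in one subcase and twenty in another, and the simplifications it finds include non-local moves such as transporting a right cap into a top-cap position to eliminate two segments at once (its Figure \ref{two-hor-segs3}), as well as quadrant slides that strictly decrease the tile count. Your proposal explicitly tries to keep all reductions ``local to a single cap-and-row configuration,'' which is precisely what cannot be done; deferring the difficulty to ``check that the isotopy is applicable in every local pattern'' defers the entire content of the proof, and in at least one forced configuration that check fails outright. To repair the argument you would need to replace the descent by the paper's structure: fix the (at most four, up to symmetry) possibilities for the first occupied row, assume a horizontal segment occurs in the second row, and carry out an exhaustive case analysis of how the mosaic can be completed, deriving in each case either a strict reduction of the non-blank tile count (contradicting space-efficiency) or a decomposition exhibiting a composite knot (contradicting primality).
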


The proof of this lemma is quite simple but long because of the accompanying figures that depict the various cases. However, knowing that we do not need to use horizontal and vertical segment tiles in the first two and last two rows and columns greatly reduces the possibilities for the layouts of a space-efficient 7-mosaic.

\begin{proof}  We prove the lemma for the second occupied row, and the other options follow by rotational symmetry. By Observation \ref{top-caps}, the first occupied row only contains one or two top caps, with the rest of the positions filled by blank tiles. By Observation \ref{four-below}, the tile locations directly below these caps must have four connection points, and this also prevents any vertical segment tiles from occurring in the second row. The only options (up to symmetry) for the first two occupied rows are as depicted in Figure \ref{first-row-caps}. We assume that the first occupied row of the mosaic is actually the top row since, if it is not, we can simply shift all of the tiles of the mosaic upward. In each case below, we will reach a contradiction when we assume that there is a horizontal segment tile in the second row. In most cases, we construct partial mosaics for every possible tile choice until it is obvious that the mosaic is not space-efficient or does not represent a prime knot. We always base our choices on the observations stated above, knowing that the tiles sharing connection points with any cap must have four connection points and avoiding the corner tiles, reducible caps, links, composite knots, and reducible knots.

\begin{figure}[h]
  \centering
  \includegraphics{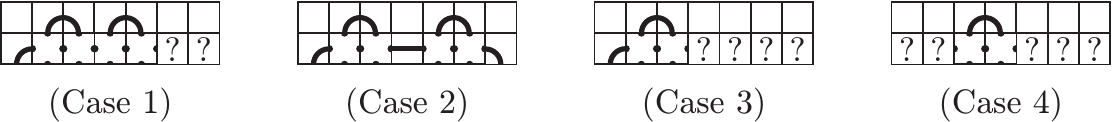}\\
  \caption{The first occupied row can only have one or two caps.}
  \label{first-row-caps}
\end{figure}

\textbf{Case 1:} First, we consider the case where there are two consecutive top caps. The first five tile positions in the second row are determined by Observation \ref{four-below}. This observation also prevents the sixth tile position from being a horizontal segment tile. Thus the sixth tile must be a single arc tile $T_1$, and the final tile position must be blank.

\textbf{Case 2:} The next case has two top caps with a blank tile in between them. In this case, the second row is completely determined and must have a horizontal segment tile as seen in Figure \ref{first-row-caps}. Our claim is that any completion of this mosaic will not be space-efficient or will not represent a prime knot. To see this, we will examine the remaining rows. There is actually only one possibility for the third row as well, which can be seen in Figure \ref{twocap-thirdrow}(a). The first and last tiles in the third row must complete a left and right cap, respectively, and the second and sixth tiles must have four connection points. The third and fifth tile positions in the third row must also have four connection points. Otherwise, they would be single arc tiles, and any resulting space-efficient mosaic would not represent a prime knot.

We now consider the remaining positions for the middle column. The tile in the fourth position can either be blank, another horizontal tile, or a single arc tile $T_1$ or $T_2$. (Because of symmetry, the $T_1$ and $T_2$ cases are equivalent.) If the entire middle column has only blank or horizontal segment tiles, then the mosaic is not space-efficient, as we can collapse this column. That is, there must eventually be a $T_1$ (or $T_2$) single arc tile. This tile clearly cannot be in the sixth row (by Observation \ref{four-below}) or the seventh row. Thus, the only options are as in Figure \ref{twocap-thirdrow}(b) and (c).

\begin{figure}[h]
  \centering
  \includegraphics{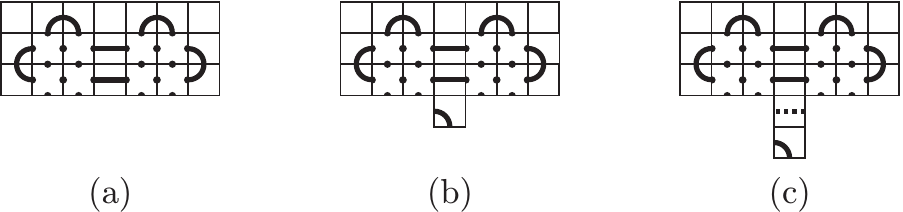}\\
  \caption{The first three rows are completely determined, and there are three options for the middle column.}
  \label{twocap-thirdrow}
\end{figure}

Suppose the tile in the middle position of the fourth row is the single arc tile $T_1$, as in Figure \ref{twocap-thirdrow}(b). Because of the locations of connection points and sides of tiles with no connection points, the options are limited. For example, the fifth position in the fourth row can only be a vertical segment tile $T_6$ or single arc tile $T_3$. Similarly, the third tile position in the fourth row can only be a single arc tile $T_3$ or a tile with four connection points. Ultimately, this leads to only six options for completing some of the surrounding tiles, and these are given in Figure \ref{twocap-thirdrow1}. It is easy to see that none of these are space-efficient. For the first three, simply slide the right-most tiles to the left by collapsing the horizontal tiles. For the fourth, fifth, and sixth mosaics, slide the upper, left quadrant to the right by collapsing the horizontal segments.

\begin{figure}[h]
  \centering
  \includegraphics[width=1\columnwidth]{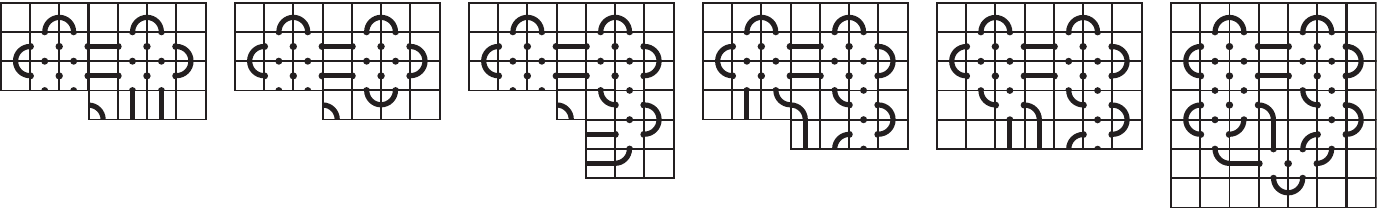}\\
  \caption{If the tile in the fourth row, fourth column is a $T_1$ tile, then the mosaic is not space-efficient.}
  \label{twocap-thirdrow1}
\end{figure}

Finally, suppose the tile in the middle position of the fourth row is either blank or a horizontal tile, and the tile in the fifth row, fourth column is the single arc tile $T_1$, as in Figure \ref{twocap-thirdrow}(c). There are only seven ways to complete the tile positions to the right of these, and we provide them in Figure \ref{twocap-thirdrow2}. In each case, it is easy to see that the resulting mosaics are not space-efficient. In the first six cases, we simply slide the upper, right quadrant of the mosaics to the left by collapsing all of the horizontal segment tiles. In the case of the seventh mosaic, we can lower the tile number of the mosaic using the planar isotopy given in Observation \ref{common-isotopy}.

\begin{figure}[h]
  \centering
  \includegraphics{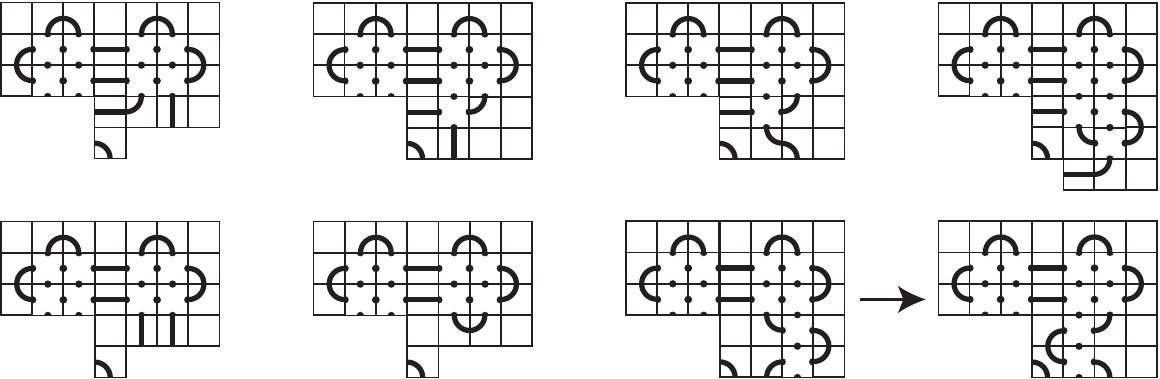}\\
  \caption{If the tile in the fourth row, fourth column is blank or a horizontal segment, then the mosaic is not space-efficient.}
  \label{twocap-thirdrow2}
\end{figure}

\textbf{Case 3:} Now let us consider the case where there is only one top cap in the first occupied row, and it is located in the first two tile positions after the corner tile, as in the third option in Figure \ref{first-row-caps}. Then the first tile in the second occupied row must be a single arc tile $T_2$, followed by two tiles with four connection points. There must also be a single arc tile $T_1$ in this row, but this $T_1$ tile cannot be part of a right cap (that is, the tile below it is not a $T_4$ tile). To see this, assume the $T_1$ tile is part of a right cap. If the $T_1$ tile is in the fourth tile position of the second row, then, using Observations \ref{four-below} and \ref{composite-knot}, it is easy to see that the knot mosaic is either not prime or not space-efficient. If the $T_1$ tile is in the fifth, sixth, or seventh position, then Observation \ref{four-below} says the preceding tile position must have four connection points, which contradicts the fact that the first row only has a single top cap.

We now examine the third row. The first tile must complete the left cap with a single arc tile $T_3$, and the second tile position must have four connection points. The third position must also have four connection points. Otherwise, this tile position would be a single arc tile $T_4$, and the mosaic would either not be space-efficient or would not represent a prime knot (Observation \ref{composite-knot}).

For the sake of contradiction, suppose there are horizontal segment tiles in the second row. There clearly cannot be three horizontal segment tiles because this forces the arc tile $T_1$ into the seventh position in this row, which is necessarily part of a right cap, and we have already ruled this out.

If there is only one horizontal segment in the second row, then it must be in the fourth tile position, and the fifth tile position is the arc tile $T_1$. We know this is not part of a right cap, and we look at the tiles below the horizontal segment. Directly below the horizontal segment must be a single arc tile $T_1$ or another horizontal segment tile. The $T_1$ option, shown in the first mosaic of Figure \ref{onecap-secondrow}, is easily simplified to reduce the number of non-blank tiles. For the horizontal segment option, the options are similar to those in Case 2. The tile below it can only be a blank, horizontal segment, or single arc ($T_1$ or $T_2$) tile. Eventually there must be a $T_1$ or $T_2$ tile to avoid the possibility of just collapsing the entire column, and this must occur in either the fourth row or the fifth row. With each of these options in mind, the remaining fourteen partially completed mosaics shown in Figure \ref{onecap-secondrow} cover all possibilities in this scenario, and it is again easy to see that none of them are space-efficient.

\begin{figure}[h]
  \centering
  \includegraphics{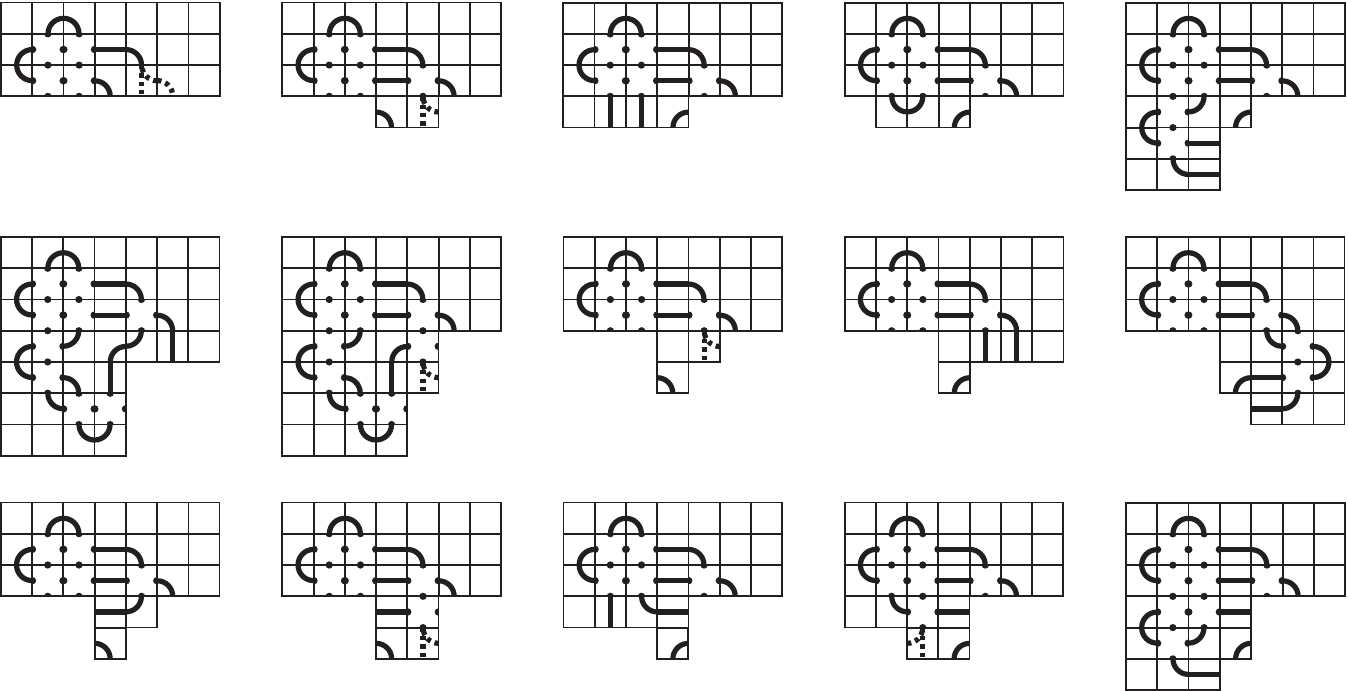}\\
  \caption{Possible configurations with a horizontal segment in the fourth tile position of the second row.}
  \label{onecap-secondrow}
\end{figure}

If there are two horizontal line segments in the second row, they must be in the fourth and fifth tile positions, and the sixth tile position is the single arc tile $T_1$. In the third row, the tile in the fourth position must be a single arc tile $T_1$ or a horizontal segment tile. If it is the $T_1$ tile, then the fifth position must be blank or the single arc tile $T_2$. In either case, shown in Figure \ref{two-hor-segs1}(a) and (b), it is easy to see that we can eliminate the horizontal segments in the second row without increasing the number of non-blank tiles.

\begin{figure}[h]
  \centering
  \includegraphics{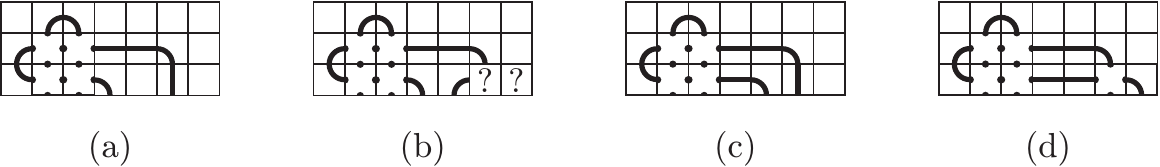}\\
  \caption{Possible configurations of the first three rows when the second row has two horizontal line segments.}
  \label{two-hor-segs1}
\end{figure}

If the tile in the fourth position of the third row is a horizontal segment tile, there are only two ways to complete the third row, and they are depicted in Figure \ref{two-hor-segs1}(c) and (d). The first one is easily seen to reduce to only one horizontal segment in the second row, which was covered above. The second possibility is also simple to eliminate after we examine a couple of tiles in the fourth row of the mosaic and perform a simple planar isotopy move. The sixth and seventh positions of the fourth row must complete the right cap, and there is an equivalent mosaic, with the fewer non-blank tiles, in which the tiles of this right cap are moved into the position of a top cap and the horizontal segments are removed, as shown in Figure \ref{two-hor-segs3}.

\begin{figure}[h]
  \centering
  \includegraphics{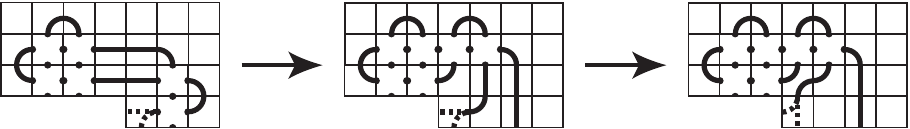}\\
  \caption{Simplifying the mosaic when the second and third row have two horizontal line segments.}
  \label{two-hor-segs3}
\end{figure}

\textbf{Case 4:} Now we consider the final case in Figure \ref{first-row-caps}, where there is a single top cap, and it is located in the third and fourth tile positions of the first occupied row. We will assume that the first column is occupied, otherwise a shift of the mosaic to the left would reduce this to Case 3. It is easy to see that a horizontal segment tile is not allowed in the second or sixth position of the second row, as this would necessarily violate Observation \ref{four-below}. Thus, if there is a horizontal segment tile, the only possibility is for it to be in the fifth position. Just as in previous cases, there must eventually be a single arc tile, $T_1$ or $T_2$, in the fifth column below the horizontal segment. There are twenty possibilities, and they are given in Figure \ref{onecap2-secondrow}. Each mosaic is either not space-efficient or does not represent a prime knot.
\begin{figure}[h]
  \centering
  \includegraphics{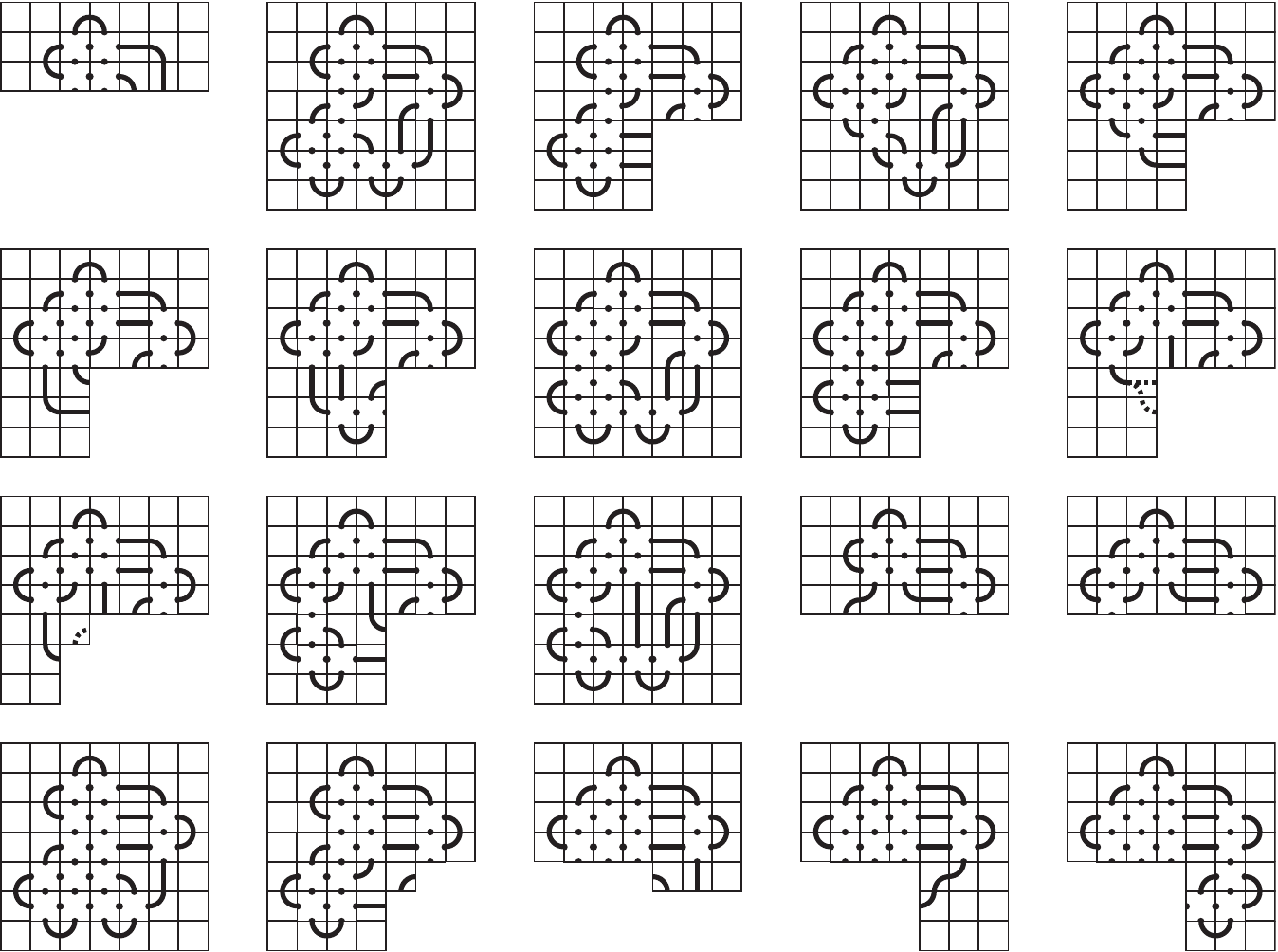}\\
  \caption{Possible configurations when there is a single top cap occupying the third and fourth position of the first row and a horizontal segment tile in the fifth position of the second row.}
  \label{onecap2-secondrow}
\end{figure}

Having completed all four cases, we have shown that any space-efficient 7-mosaic of a prime knot can be simplified so that the second occupied row does not have a horizontal segment tile.
\end{proof}

Now that we have shown that all space-efficient 7-mosaics of prime knots can be created without horizontal or vertical segment tiles in the first two and last two rows and columns, we are ready to turn our focus to proving Theorem \ref{tile-numbers}.

\begin{pfref}
We assume we have a space-efficient 7-mosaic of a prime knot for which every row is occupied. The case where every column is occupied is equivalent. Our goal is to show that the only possible layouts for the mosaic are those given in the statement of the theorem or that it is equivalent to one of them via a planar isotopy that does not increase the number of non-blank tiles. The resulting number of non-blank tiles follows immediately.

The first and seventh rows of the mosaic have either 2 or 4 non-blank tiles (one or two caps by Observation \ref{top-caps}). In either case, we have either 4 or 6 non-blank tiles in the second and sixth rows. In the proof of Lemma \ref{no-segments}, we show that there are only three options for the first row. These lead to only four possibilities for the first two rows (up to symmetry), and these are given in Figure \ref{7mosaic-layouts_row12}.

\begin{figure}[h]
  \centering
  \includegraphics{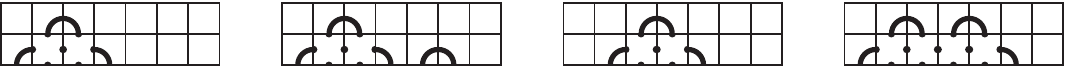}\\
  \caption{The first two rows of a space-efficient 7-mosaic.}
  \label{7mosaic-layouts_row12}
\end{figure}

Assuming the first column is occupied, the options for the first two rows extend to the first two columns. After removing any duplicates that are equivalent up to symmetry, there are nine options for the first two rows and columns of the mosaic. See Figure \ref{7mosaic-layouts_col12}.

\begin{figure}[h]
  \centering
  \includegraphics{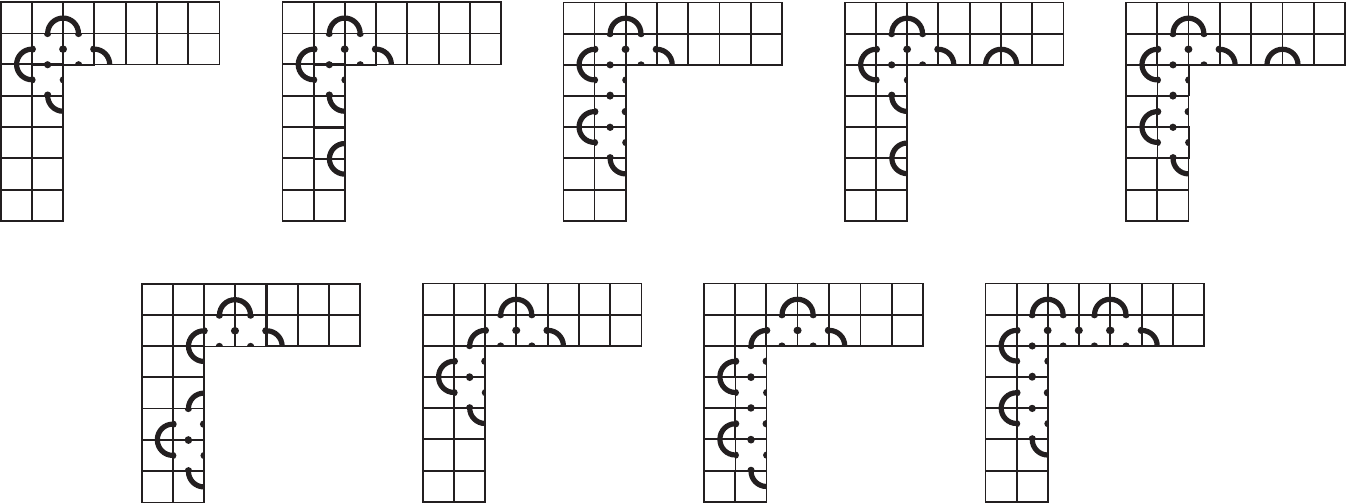}\\
  \caption{The first two rows and columns of a space-efficient 7-mosaic.}
  \label{7mosaic-layouts_col12}
\end{figure}

Next, we consider the bottom two rows and the two rightmost columns. We have assumed that all rows are occupied but not necessarily every column. However, in order to avoid composite knots or space-inefficiency, there must be at least 4 connection points between any two rows (Observation \ref{composite-knot}), except between the first two and last two rows. Removing options that are equivalent to others via symmetry, we find 20 possibilities for the two outermost rows and columns. See Figure \ref{7mosaic-layouts_outer-shell}.

\begin{figure}[h]
  \centering
  \includegraphics{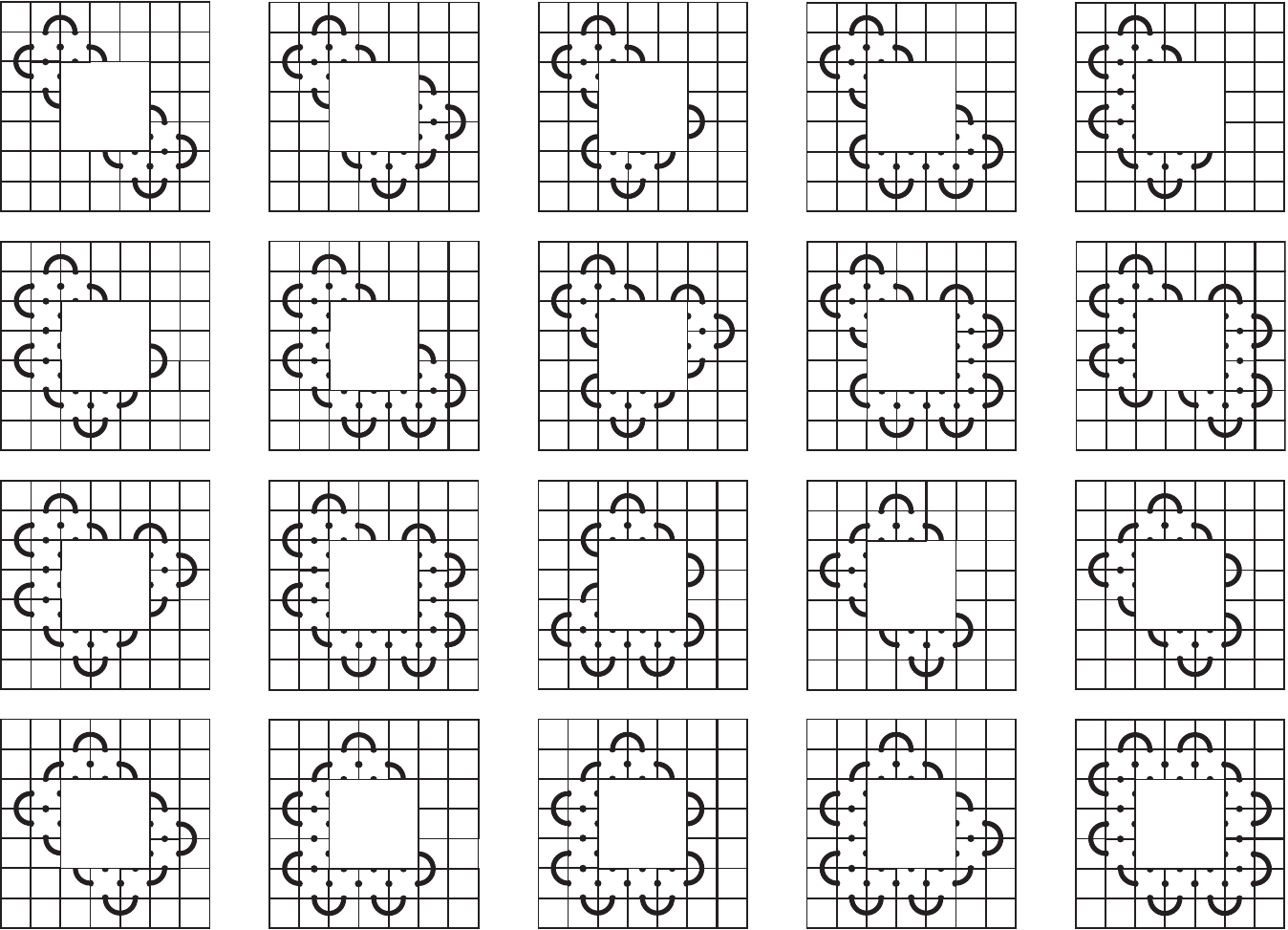}\\
  \caption{The first and last two rows and columns of a space-efficient 7-mosaic.}
  \label{7mosaic-layouts_outer-shell}
\end{figure}

%
%
%

Now that we have a complete set of possibilities for the outer shell of a space-efficient 7-mosaic, we want to fill in the inner $3 \times 3$ block. Some of that can be accomplished using Observation \ref{four-below}. Using Observation \ref{composite-knot}, we need at least four connection points between each of the middle three rows and columns, and, for example, in the upper, left $3 \times 3$ block of the first mosaic in Figure \ref{7mosaic-layouts_outer-shell}, the tile in the third row and third column cannot be a single arc tile $T_4$.
Therefore, the tile in that location must have four connection points.

These simple observations are applied as we determine the options for filling in the inner $3 \times 3$ blocks of the mosaics in Figure \ref{7mosaic-layouts_outer-shell}. For some, there is only one way to complete the mosaic, such as the first, second, third, fourth, eighth, ninth, tenth, and thirteenth mosaic. All of these, except the second one, lead to one of the desired layouts given in the statement of the theorem. The others have more than one possible way to complete them and require further consideration.

The second outer shell in Figure \ref{7mosaic-layouts_outer-shell} must be completed with horizontal or vertical segment tiles, as seen in the first mosaic of Figure \ref{7mosaic-layouts_sp-ineff1}. The resulting mosaic is clearly not space-efficient, as we can reduce the number of non-blank tiles with a planar isotopy that shifts the upper, left $3 \times 3$ block to the left.

\begin{figure}[h]
  \centering
  \includegraphics{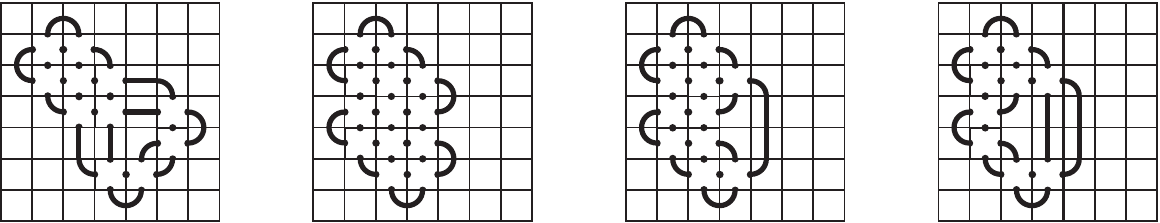}\\
  \caption{The second outer shell completed is not space-efficient, and the fifth outer shell can be completed in three ways.}
  \label{7mosaic-layouts_sp-ineff1}
\end{figure}

For each of the remaining outer shells, if we fill the remaining tile positions of the inner $3 \times 3$ block, when possible, with tiles that have four connection points, we end up with the remaining layouts given in the statement of the theorem. If we assume that at least one of those tile positions does not have four connection points, there are several possibilities to consider, which we look at next, but each one is either not space-efficient or can be changed to one of the desired layouts without changing the number of non-blank tiles. For example, if we complete the fifth outer shell with tiles that have four connection points when possible, the result is the second mosaic in Figure \ref{7mosaic-layouts_sp-ineff1}, which is the third layout given in the statement of the theorem. However, there are two alternative completions, given in the third and fourth mosaics of Figure \ref{7mosaic-layouts_sp-ineff1}. In both cases, the vertical segment tiles can be altered by a planar isotopy that changes it to the second mosaic without changing the number of non-blank tiles.

%

The sixth, seventh, eleventh, twelfth, fifteenth, eighteenth, and twentieth outer shells each have one alternative completion, which are shown in Figure \ref{7mosaic-layouts_sp-ineff2}. None of these are space-efficient as each can be simplified to one of the layouts given in the statement of the theorem using the planar isotopy shown in Observation \ref{common-isotopy}.

\begin{figure}[h]
  \centering
  \includegraphics{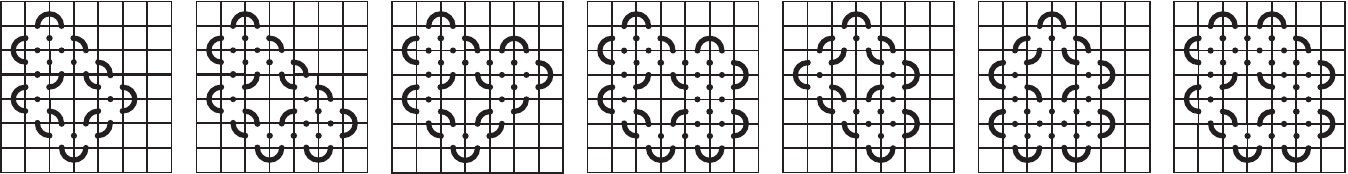}\\
  \caption{The sixth, seventh, eleventh, twelfth, fifteenth, eighteenth, and twentieth outer shells completed in ways other than those given in Theorem \ref{tile-numbers}.}
  \label{7mosaic-layouts_sp-ineff2}
\end{figure}

For the fourteenth and seventeenth outer shells, there are two alternative completions, given in Figure \ref{7mosaic-layouts_sp-ineff3}. In each case, the vertical segment tiles can be removed by a planar isotopy without changing the number of non-blank tiles.

\begin{figure}[h]
  \centering
  \includegraphics{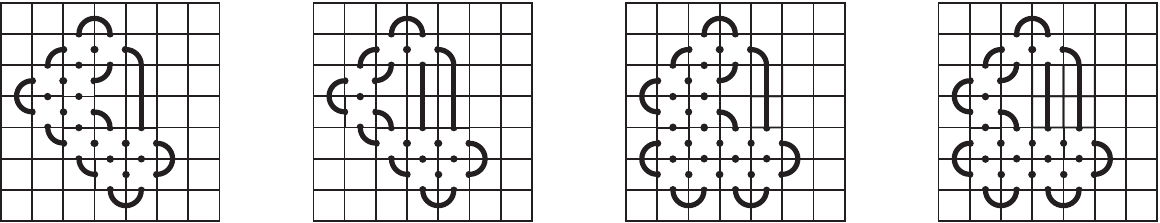}\\
  \caption{The alternative completions of the fourteenth and seventeenth outer shells can be simplified to not use the vertical segment tiles.}
  \label{7mosaic-layouts_sp-ineff3}
\end{figure}

The sixteenth outer shell has six alternative completions, and they are given in Figure \ref{7mosaic-layouts_sp-ineff4}. The first two are easily simplified, reducing the number of non-blank tiles, using the isotopy of Observation \ref{common-isotopy}. The next three are easily seen to be equivalent to the first two. The sixth alternative can be simplified, as shown, by rotating the tile in the fourth row and second column, which has four connection points, into the fourth row, third column.

\begin{figure}[h]
  \centering
  \includegraphics{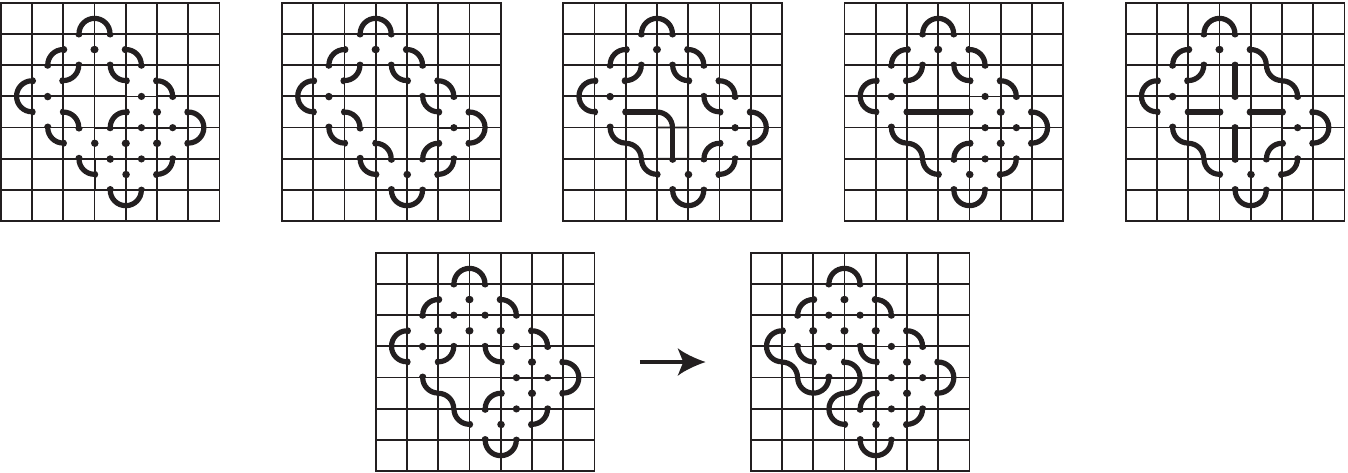}\\
  \caption{The sixteenth outer shell completed in ways other than those given in Theorem \ref{tile-numbers}.}
  \label{7mosaic-layouts_sp-ineff4}
\end{figure}

Finally, we encounter six alternative completions of the nineteenth outer shell, see in Figure \ref{7mosaic-layouts_sp-ineff5}. Each of these are handled in ways similar to the sixteenth outer shell.

\begin{figure}[h]
  \centering
  \includegraphics[width=1\columnwidth]{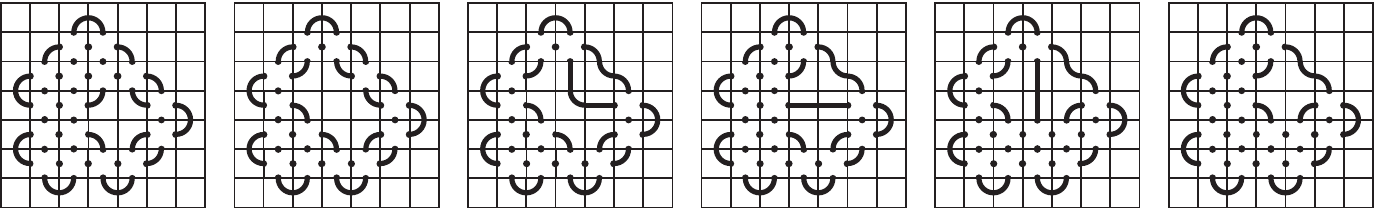}\\
  \caption{The nineteenth outer shell completed in ways other than those given in Theorem \ref{tile-numbers}.}
  \label{7mosaic-layouts_sp-ineff5}
\end{figure}

This completes our discussion of filling in the outer shells given in Figure \ref{7mosaic-layouts_outer-shell}. We have exhausted all possibilities of completing a space-efficient 7-mosaic in which all rows are occupied, arriving at those listed in the statement of the theorem.
\end{pfref}

\section{Mosaics for Theorem \ref{t27-m7}}\label{table-of-mosaics}

In this section we include the knot mosaics for each of the prime knots listed in Theorem \ref{t27-m7}. These mosaics constitute the proof for the theorem.
\begin{figure}
    \centering
    \phantom{.} \hfill
    \begin{minipage}{0.17\textwidth}
        \captionsetup{skip=3pt}
        \centering
        \includegraphics{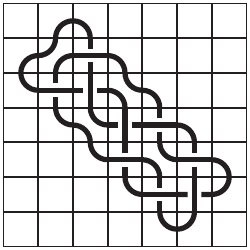}
        \caption*{$9_{6}$}
    \end{minipage} \hfill
     \begin{minipage}{0.17\textwidth}
        \captionsetup{skip=3pt}
        \centering
        \includegraphics{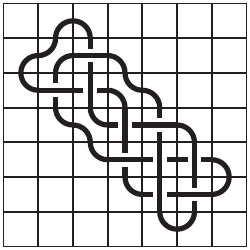}
        \caption*{$9_{15}$}
    \end{minipage} \hfill
    \begin{minipage}{0.17\textwidth}
        \captionsetup{skip=3pt}
        \centering
        \includegraphics{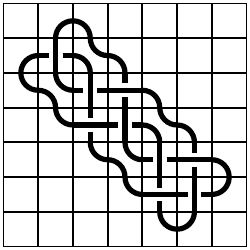}
        \caption*{$9_{18}$}
    \end{minipage} \hfill
     \begin{minipage}{0.17\textwidth}
        \captionsetup{skip=3pt}
        \centering
        \includegraphics{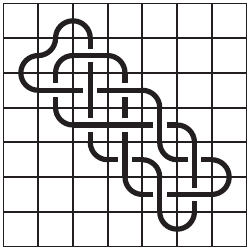}
        \caption*{$10_{5}$}
    \end{minipage} \hfill
    \begin{minipage}{0.17\textwidth}
        \captionsetup{skip=3pt}
        \centering
        \includegraphics{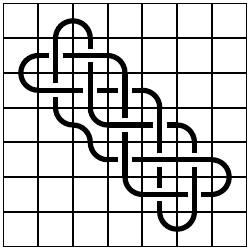}
        \caption*{$10_{6}$}
    \end{minipage} \hfill \phantom{.}
\end{figure}
\begin{figure}
     \vspace{-.1in} \centering
    \phantom{.} \hfill
    \begin{minipage}{0.17\textwidth}
        \captionsetup{skip=3pt}
        \centering
        \includegraphics{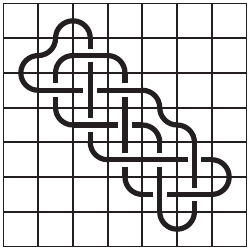}
        \caption*{$10_{7}$}
    \end{minipage} \hfill
     \begin{minipage}{0.17\textwidth}
        \captionsetup{skip=3pt}
        \centering
        \includegraphics{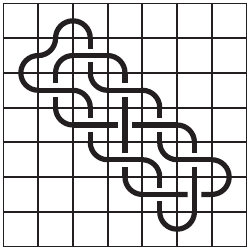}
        \caption*{$10_{8}$}
    \end{minipage} \hfill
    \begin{minipage}{0.17\textwidth}
        \captionsetup{skip=3pt}
        \centering
        \includegraphics{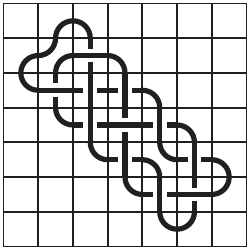}
        \caption*{$10_{9}$}
    \end{minipage} \hfill
     \begin{minipage}{0.17\textwidth}
        \captionsetup{skip=3pt}
        \centering
        \includegraphics{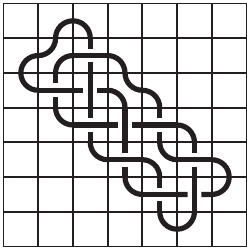}
        \caption*{$10_{10}$}
    \end{minipage} \hfill
    \begin{minipage}{0.17\textwidth}
        \captionsetup{skip=3pt}
        \centering
        \includegraphics{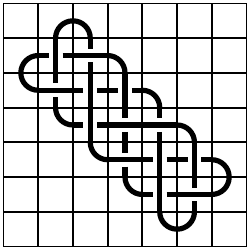}
        \caption*{$10_{13}$}
    \end{minipage} \hfill \phantom{.}
\end{figure}
\begin{figure}
    \vspace{-.1in}\centering
    \phantom{.} \hfill
    \begin{minipage}{0.17\textwidth}
        \captionsetup{skip=3pt}
        \centering
        \includegraphics{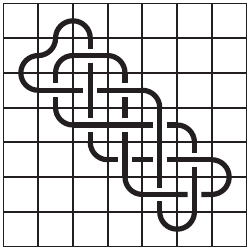}
        \caption*{$10_{14}$}
    \end{minipage} \hfill
     \begin{minipage}{0.17\textwidth}
        \captionsetup{skip=3pt}
        \centering
        \includegraphics{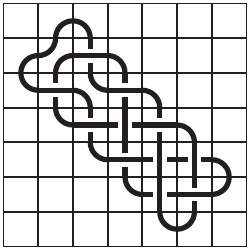}
        \caption*{$10_{15}$}
    \end{minipage} \hfill
    \begin{minipage}{0.17\textwidth}
        \captionsetup{skip=3pt}
        \centering
        \includegraphics{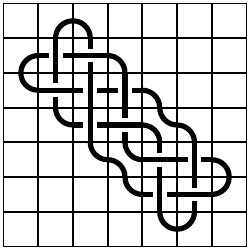}
        \caption*{$10_{16}$}
    \end{minipage} \hfill
     \begin{minipage}{0.17\textwidth}
        \captionsetup{skip=3pt}
        \centering
        \includegraphics{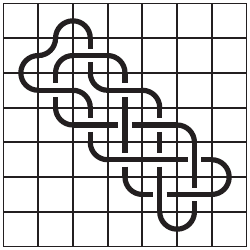}
        \caption*{$10_{17}$}
    \end{minipage} \hfill
    \begin{minipage}{0.17\textwidth}
        \captionsetup{skip=3pt}
        \centering
        \includegraphics{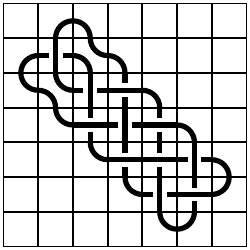}
        \caption*{$10_{18}$}
    \end{minipage} \hfill \phantom{.}
\end{figure}
\begin{figure}
     \vspace{-.1in} \centering
    \phantom{.} \hfill
    \begin{minipage}{0.17\textwidth}
        \captionsetup{skip=3pt}
        \centering
        \includegraphics{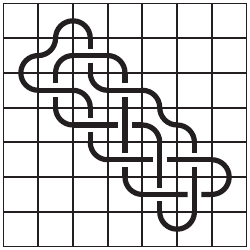}
        \caption*{$10_{19}$}
    \end{minipage} \hfill
     \begin{minipage}{0.17\textwidth}
        \captionsetup{skip=3pt}
        \centering
        \includegraphics{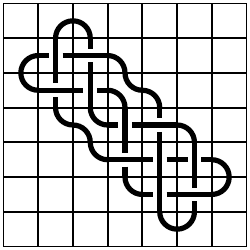}
        \caption*{$10_{24}$}
    \end{minipage} \hfill
    \begin{minipage}{0.17\textwidth}
        \captionsetup{skip=3pt}
        \centering
        \includegraphics{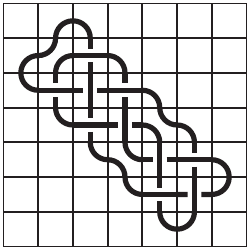}
        \caption*{$10_{25}$}
    \end{minipage} \hfill
     \begin{minipage}{0.17\textwidth}
        \captionsetup{skip=3pt}
        \centering
        \includegraphics{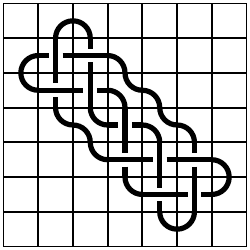}
        \caption*{$10_{26}$}
    \end{minipage} \hfill
    \begin{minipage}{0.17\textwidth}
        \captionsetup{skip=3pt}
        \centering
        \includegraphics{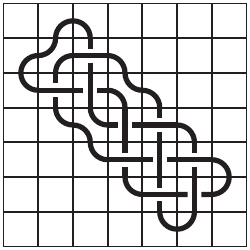}
        \caption*{$10_{29}$}
    \end{minipage} \hfill \phantom{.}
\end{figure}
\begin{figure}
     \vspace{-.1in} \centering
    \phantom{.} \hfill
    \begin{minipage}{0.17\textwidth}
        \captionsetup{skip=3pt}
        \centering
        \includegraphics{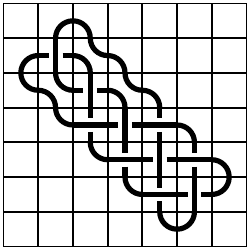}
        \caption*{$10_{30}$}
    \end{minipage} \hfill
     \begin{minipage}{0.17\textwidth}
        \captionsetup{skip=3pt}
        \centering
        \includegraphics{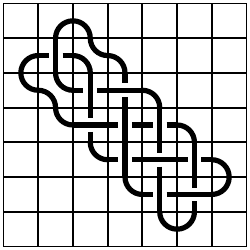}
        \caption*{$10_{31}$}
    \end{minipage} \hfill
    \begin{minipage}{0.17\textwidth}
        \captionsetup{skip=3pt}
        \centering
        \includegraphics{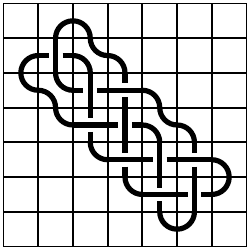}
        \caption*{$10_{32}$}
    \end{minipage} \hfill
     \begin{minipage}{0.17\textwidth}
        \captionsetup{skip=3pt}
        \centering
        \includegraphics{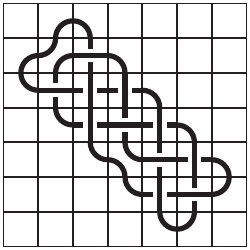}
        \caption*{$10_{33}$}
    \end{minipage} \hfill
    \begin{minipage}{0.17\textwidth}
        \captionsetup{skip=3pt}
        \centering
        \includegraphics{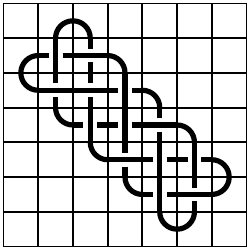}
        \caption*{$10_{35}$}
    \end{minipage} \hfill \phantom{.}
\end{figure}
\begin{figure}
     \vspace{-.1in} \centering
    \phantom{.} \hfill
    \begin{minipage}{0.17\textwidth}
        \captionsetup{skip=3pt}
        \centering
        \includegraphics{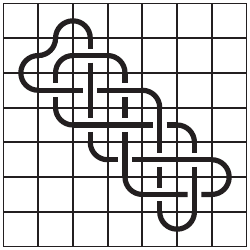}
        \caption*{$10_{36}$}
    \end{minipage} \hfill
     \begin{minipage}{0.17\textwidth}
        \captionsetup{skip=3pt}
        \centering
        \includegraphics{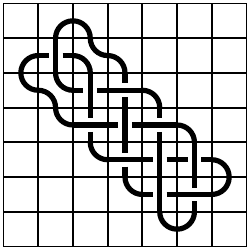}
        \caption*{$10_{38}$}
    \end{minipage} \hfill
    \begin{minipage}{0.17\textwidth}
        \captionsetup{skip=3pt}
        \centering
        \includegraphics{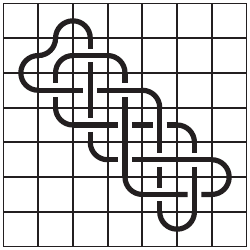}
        \caption*{$10_{39}$}
    \end{minipage} \hfill
     \begin{minipage}{0.17\textwidth}
        \captionsetup{skip=3pt}
        \centering
        \includegraphics{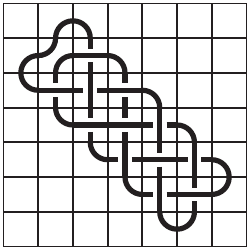}
        \caption*{\ka{11}{90}}
    \end{minipage} \hfill
    \begin{minipage}{0.17\textwidth}
        \captionsetup{skip=3pt}
        \centering
        \includegraphics{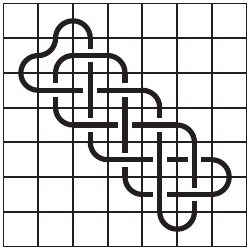}
        \caption*{\ka{11}{93}}
    \end{minipage} \hfill \phantom{.}
\end{figure}
\begin{figure}
    \vspace{-.1in} \centering
    \phantom{.} \hfill
    \begin{minipage}{0.17\textwidth}
        \captionsetup{skip=3pt}
        \centering
        \includegraphics{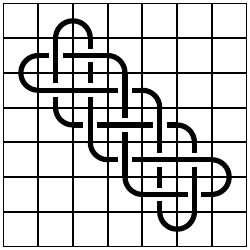}
        \caption*{\ka{11}{119}}
    \end{minipage} \hfill
     \begin{minipage}{0.17\textwidth}
        \captionsetup{skip=3pt}
        \centering
        \includegraphics{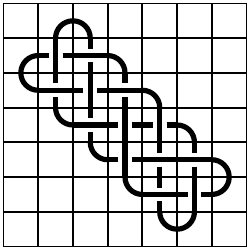}
        \caption*{\ka{11}{145}}
    \end{minipage} \hfill
    \begin{minipage}{0.17\textwidth}
        \captionsetup{skip=3pt}
        \centering
        \includegraphics{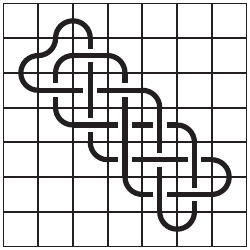}
        \caption*{\ka{11}{180}}
    \end{minipage} \hfill
     \begin{minipage}{0.17\textwidth}
        \captionsetup{skip=3pt}
        \centering
        \includegraphics{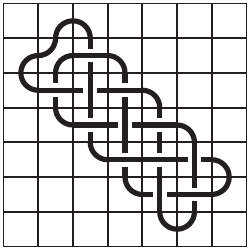}
        \caption*{\ka{11}{184}}
    \end{minipage} \hfill
    \begin{minipage}{0.17\textwidth}
        \captionsetup{skip=3pt}
        \centering
        \includegraphics{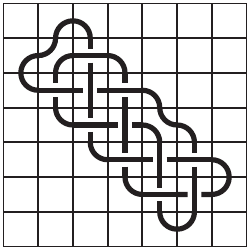}
        \caption*{\ka{11}{185}}
    \end{minipage} \hfill \phantom{.}
\end{figure}
\begin{figure}
    \vspace{-.1in}
    \centering
    \phantom{.} \hfill
    \begin{minipage}{0.17\textwidth}
        \captionsetup{skip=3pt}
        \centering
        \includegraphics{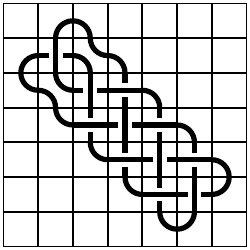}
        \caption*{\ka{11}{192}}
    \end{minipage} \hfill
     \begin{minipage}{0.17\textwidth}
        \captionsetup{skip=3pt}
        \centering
        \includegraphics{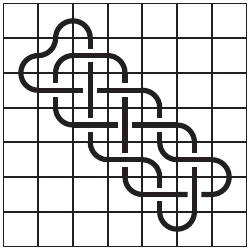}
        \caption*{\ka{11}{203}}
    \end{minipage} \hfill
    \begin{minipage}{0.17\textwidth}
        \captionsetup{skip=3pt}
        \centering
        \includegraphics{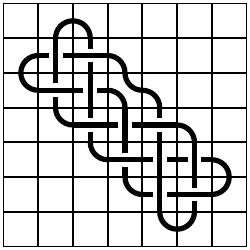}
        \caption*{\ka{11}{205}}
    \end{minipage} \hfill
     \begin{minipage}{0.17\textwidth}
        \captionsetup{skip=3pt}
        \centering
        \includegraphics{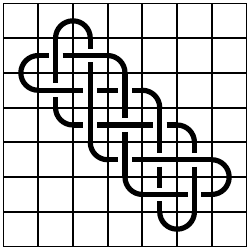}
        \caption*{\ka{11}{210}}
    \end{minipage} \hfill
    \begin{minipage}{0.17\textwidth}
        \captionsetup{skip=3pt}
        \centering
        \includegraphics{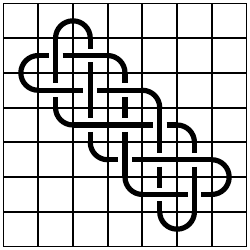}
        \caption*{\ka{11}{226}}
    \end{minipage} \hfill \phantom{.}
\end{figure}
\begin{figure}
    \vspace{-.1in}
    \centering
    \phantom{.} \hfill
    \begin{minipage}{0.17\textwidth}
        \captionsetup{skip=3pt}
        \centering
        \includegraphics{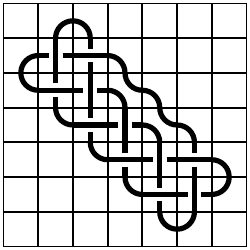}
        \caption*{\ka{11}{306}}
    \end{minipage} \hfill
     \begin{minipage}{0.17\textwidth}
        \captionsetup{skip=3pt}
        \centering
        \includegraphics{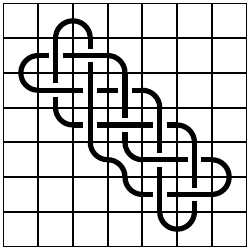}
        \caption*{\ka{11}{307}}
    \end{minipage} \hfill
    \begin{minipage}{0.17\textwidth}
        \captionsetup{skip=3pt}
        \centering
        \includegraphics{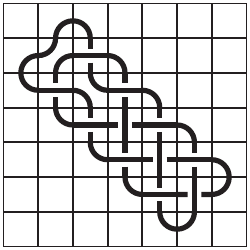}
        \caption*{\ka{11}{308}}
    \end{minipage} \hfill
     \begin{minipage}{0.17\textwidth}
        \captionsetup{skip=3pt}
        \centering
        \includegraphics{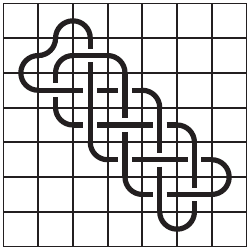}
        \caption*{\ka{11}{309}}
    \end{minipage} \hfill
    \begin{minipage}{0.17\textwidth}
        \captionsetup{skip=3pt}
        \centering
        \includegraphics{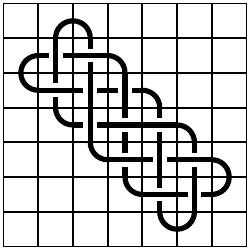}
        \caption*{\ka{11}{311}}
    \end{minipage} \hfill \phantom{.}
\end{figure}
\begin{figure}
    \vspace{-.1in}
    \centering
    \phantom{.} \hfill
    \begin{minipage}{0.17\textwidth}
        \captionsetup{skip=3pt}
        \centering
        \includegraphics{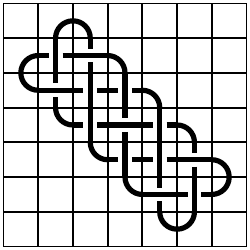}
        \caption*{\ka{11}{333}}
    \end{minipage} \hfill
     \begin{minipage}{0.17\textwidth}
        \captionsetup{skip=3pt}
        \centering
        \includegraphics{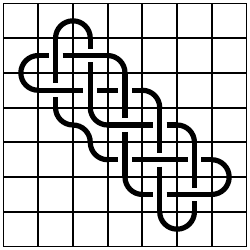}
        \caption*{\ka{11}{336}}
    \end{minipage} \hfill
    \begin{minipage}{0.17\textwidth}
        \captionsetup{skip=3pt}
        \centering
        \includegraphics{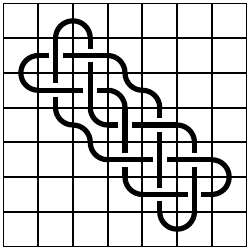}
        \caption*{\ka{11}{337}}
    \end{minipage} \hfill
     \begin{minipage}{0.17\textwidth}
        \captionsetup{skip=3pt}
        \centering
        \includegraphics{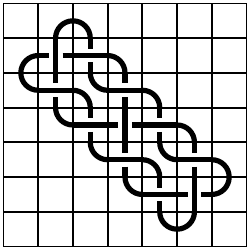}
        \caption*{\ka{11}{363}}
    \end{minipage} \hfill
    \begin{minipage}{0.17\textwidth}
        \captionsetup{skip=3pt}
        \centering
        \includegraphics{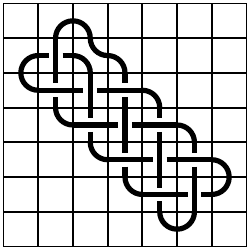}
        \caption*{\ka{12}{541}}
    \end{minipage} \hfill \phantom{.}
\end{figure}
\begin{figure}
    \vspace{-.1in}
    \centering
    \phantom{.} \hfill
    \begin{minipage}{0.17\textwidth}
        \captionsetup{skip=3pt}
        \centering
        \includegraphics{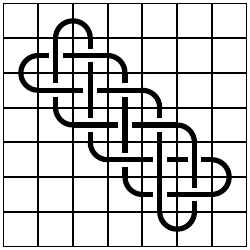}
        \caption*{\ka{12}{601}}
    \end{minipage} \hfill
     \begin{minipage}{0.17\textwidth}
        \captionsetup{skip=3pt}
        \centering
        \includegraphics{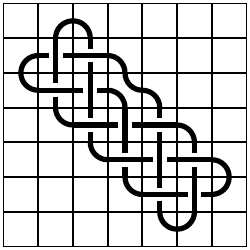}
        \caption*{\ka{12}{1024}}
    \end{minipage} \hfill
    \begin{minipage}{0.17\textwidth}
        \captionsetup{skip=3pt}
        \centering
        \includegraphics{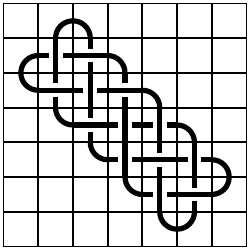}
        \caption*{\ka{12}{1034}}
    \end{minipage} \hfill
     \begin{minipage}{0.17\textwidth}
        \captionsetup{skip=3pt}
        \centering
        \includegraphics{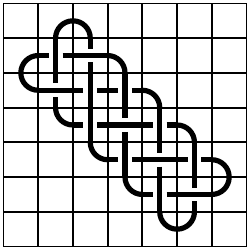}
        \caption*{\ka{12}{1126}}
    \end{minipage} \hfill
    \begin{minipage}{0.17\textwidth}
        \captionsetup{skip=3pt}
        \centering
        \includegraphics{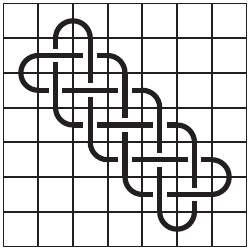}
        \caption*{\ka{13}{4304}}
    \end{minipage} \hfill \phantom{.}
\end{figure}

\bibliographystyle{amsplain}
\bibliography{bibliography}
\addcontentsline{toc}{section}{\refname}

\end{document}